    \definecolor{plum}  {rgb}{.4,0,.4}
    \definecolor{BrickRed} {rgb}{0.6,0,0}
	\definecolor{DarkBlue} {rgb}{0,0,0.6}
\def\ddefloop#1{\ifx\ddefloop#1\else\ddef{#1}\expandafter\ddefloop\fi}
\def\ddef#1{\expandafter\def\csname b#1\endcsname{\ensuremath{\boldsymbol{#1}}}}
\def\ddef#1{\expandafter\def\csname c#1\endcsname{\ensuremath{\mathcal{#1}}}}
\def\ddef#1{\expandafter\def\csname s#1\endcsname{\ensuremath{\mathsf{#1}}}}
\def\Reals{{\mathbb R}}
\def\Ex{{\mathbf E}} 
\newsavebox{\@brx}
\newcommand{\llangle}[1][]{\savebox{\@brx}{\(\m@th{#1\langle}\)}%
  \mathopen{\copy\@brx\kern-0.5\wd\@brx\usebox{\@brx}}}
\newcommand{\rrangle}[1][]{\savebox{\@brx}{\(\m@th{#1\rangle}\)}%
  \mathclose{\copy\@brx\kern-0.5\wd\@brx\usebox{\@brx}}}
\newtheorem{theorem}{Theorem}
\newtheorem{lemma}{Lemma}
\newtheorem{corollary}{Corollary}
\newtheorem{remark}{Remark}
\begin{document}

\title{
Talagrand Meets Talagrand: Upper and Lower Bounds on Expected Soft Maxima of Gaussian Processes with Finite Index Sets}
\author{Yifeng Chu \\ \href{mailto:ychu26@illinois.edu}{ychu26@illinois.edu} \\ 
\and Maxim Raginsky \\ \href{mailto:maxim@illinois.edu}{maxim@illinois.edu} }
\date{}
\maketitle

\begin{abstract}%
Analysis of extremal behavior of stochastic processes is a key ingredient in a wide variety of applications, including probability, statistical physics, theoretical computer science, and learning theory.  In this paper, we consider centered Gaussian processes on finite index sets and investigate expected values of their smoothed, or ``soft,'' maxima. We obtain upper and lower bounds for these expected values using a combination of ideas from statistical physics (the Gibbs variational principle for the equilibrium free energy and replica-symmetric representations of Gibbs averages) and from probability theory (Sudakov minoration). These bounds are parametrized by an inverse temperature $\beta > 0$ and reduce to the usual Gaussian maximal inequalities in the zero-temperature limit $\beta \to \infty$. We provide an illustration of our methods in the context of the Random Energy Model, one of the simplest models of physical systems with random disorder.
\end{abstract}


\section{Introduction and informal summary of results}

The study of extremal behavior of stochastic processes is of fundamental importance in a wide variety of applications, including pure and applied probability, theoretical computer science, mathematical statistics, and statistical physics. For example, the method of generic chaining \citep{Talagrand_2014} gives sharp upper and lower bounds on the expected supremum of a centered Gaussian process $X = (X_t)_{t \in T}$ in terms of the geometry of the index set $T$ induced by the natural metric $d(s,t) := (\Ex|X_s - X_t|^2)^{1/2}$. The underlying intuition here is that, for finite $T$, $\sqrt{\log |T|}$ is a good proxy for the expected supremum of the centered Gaussian process $(X_t)_{t \in T}$, in the sense that
\begin{align}\label{eq:expected_max}
	a \sqrt{\log |T|} \lesssim \Ex\Big[\sup_{t \in T}X_t\Big] \lesssim \Delta\sqrt{\log |T|},
\end{align}
where $a = \min \{d(s,t) : s,t \in T, s \neq t\}$ is the minimum separation between the elements of $T$ and $\Delta = \max \{d(s,t) : s,t \in T\}$ is the diameter of $T$. Here, the upper bound is by the well-known maximal inequality for Gaussian random variables, while the lower bound follows from Sudakov minoration. Another important observation is that the zero-mean Gaussian random variables $X_s$ and $X_t$ are highly correlated when $d(s,t)$ is very small and nearly independent when $d(s,t)$ is very large.  The method of generic chaining is, essentially, a multiscale iterative application of these ideas, where at each scale $n=0,1,2,\dots$ one approximates the process $X$ by its ``subsampled'' version at a finite set of indices $T_n \subseteq T$ satisfying the cardinality bound $|T_n| < 2^{2^n}$. 

While the expected supremum is a natural measure of the size of $X$, there are various ``smoothed'' versions of it that may be of interest in their own right, for example in the context of concentration and superconcentration phenomena in Gaussian random fields \citep{Chatterjee_2014}. For example, if $T$ is finite, then $\max_{t \in T} X_t$ can be bounded from above and from below by the ``$\beta$-softmax'' for every $\beta > 0$:
\begin{align}\label{eq:softmax_bounds}
	\max_{t \in T} X_t \le \frac{1}{\beta}\log \sum_{t \in T}e^{\beta X_t} \le \max_{t \in T} X_t + \frac{\log |T|}{\beta}.
\end{align}
Another smooth proxy for the maximum is 
\begin{align}
	g(X; \beta) := \frac{1}{Z(\beta)}\sum_{t \in T}X_t e^{\beta X_t},
\end{align}
where  $Z(\beta) := \sum_{t \in T}e^{\beta X_t}$ is known as the partition function. It satisfies the inequalities
\begin{align}\label{eq:gibbs_bounds}
	\max_{t \in T}X_t - \frac{\log |T|}{\beta} \le g(X; \beta) \le \max_{t \in T} X_t;
\end{align}
moreover, as we increase the parameter $\beta > 0$, $g(\beta;X)$ varies from the sample average $\frac{1}{|T|}\sum_{t \in T}X_t$ at $\beta = 0$ to the maximum $\max_{t \in T} X_t$ in the limit as $\beta \to \infty$.

With these considerations in mind, let $X = (X_t)_{t \in T}$ be a centered Gaussian process with a finite index set $T$. We will assume throughout that $\Ex|X_s - X_t|^2 > 0$ for all $s \neq t$, i.e., $d$ is a metric and not a pseudometric. For each $\beta > 0$, we introduce the \textit{quenched Gibbs average}
\begin{align}\label{eq:quenched_Gibbs_energy}
	g(\beta) := \Ex\Bigg[\frac{1}{Z(\beta)}\sum_{t \in T}X_t e^{\beta X_t}\Bigg] = \Ex g(X; \beta)
\end{align}
and the \textit{quenched equilibrium free energy}
\begin{align}\label{eq:quenched_free_energy}
	\phi(\beta) := \frac{1}{\beta}\Ex\log\Bigg(\frac{1}{|T|}\sum_{t \in T}e^{\beta X_t}\Bigg) = \frac{1}{\beta} \Ex \log Z(\beta) - \frac{\log |T|}{\beta}.
\end{align}
The terminology originates in statistical physics of disordered systems \citep{Bovier2006}, where $T$ is interpreted as the state space of a physical system and $-X_t$ is the random  energy of the state $t \in T$. Each realization of $X$ gives rise to a family of random Gibbs measures $(\nu_\beta)_{\beta > 0}$ on $T$:
\begin{align*}
	\nu_\beta(A) := \frac{\sum_{t \in A}e^{\beta X_t}}{Z(\beta)}, \qquad A \subseteq T
\end{align*}
where the parameter $\beta$ is the inverse temperature. In the infinite-temperature limit $\beta \to 0$, $\nu_\beta$ converges to the uniform distribution on $T$, which we will denote by $\nu_0$. If we write $\langle f \rangle_\beta$ for the expectation of a function $f : T \to \Reals$ w.r.t.\ $\nu_\beta$, then we can express the quantities \eqref{eq:quenched_Gibbs_energy} and \eqref{eq:quenched_free_energy} as
\begin{align*}
	g(\beta) = \Ex[\langle X \rangle_\beta] \qquad \text{and} \qquad \phi(\beta) = \Ex\Big[\langle X \rangle_\beta - \beta^{-1} D(\nu_\beta\|\nu_0)\Big],
\end{align*}
where $D(\cdot\|\cdot)$ is the relative entropy (or Kullback--Leibler divergence). The term ``quenched'' refers to the fact that the disorder $(X_t)_{t \in T}$ is frozen (or quenched) when computing the Gibbs averages $\langle \cdot \rangle_\beta$, and the expectation $\Ex[\cdot]$ w.r.t.\ the randomness of $X$ is taken at the end. 

In light of \eqref{eq:softmax_bounds} and \eqref{eq:gibbs_bounds}, both $g(\beta)$ and $\phi(\beta)$ can be viewed as smoothed versions of the expected maximum of $X$. In particular, both converge to it in the zero-temperature limit $\beta \to \infty$. Since $\sqrt{\log |T|}$ is present on both sides of \eqref{eq:expected_max}, it is natural to ask which quantity plays the role of $\sqrt{\log |T|}$ for finite values of $\beta$. Ideally, it should converge to $\sqrt{\log |T|}$ as $\beta \to \infty$ and to $0$ as $\beta \to 0$. In this paper, we show the following:
\begin{enumerate}
	\item For every $\beta \ge 0$, the quenched Gibbs average $g(\beta)$  can  be upper-bounded as
	\begin{align*}
		g(\beta) \lesssim \sigma \sqrt{\Ex D(\nu_\beta \| \nu_0)}
	\end{align*}
	where $\sigma^2$ is the maximum of the variances ${\rm Var}[X_t]$. Moreover, a Sudakov-type lower bound holds in the low-temperature regime: There exists a threshold $\beta_* > 0$ that depends on the covariance structure of $X$, such that
	\begin{align*}
		g(\beta) \gtrsim a\sqrt{\Ex D(\nu_\beta \| \nu_0)} \qquad \text{for } \beta \ge \beta_*
	\end{align*}
	where $a > 0$ is the minimum separation between the elements of $T$. When the $X_t$'s are i.i.d.\ $\cN(0,\sigma^2)$ random variables, the upper and the lower bounds match up to universal constants and hold for \textit{all} values of $\beta$:
	\begin{align*}
		g(\beta) \asymp \sigma \sqrt{\Ex D(\nu_\beta\|\nu_0)}.
	\end{align*}
	\item The quenched free energy $\phi(\beta)$ can be upper-bounded as
	\begin{align*}
		\phi(\beta) \lesssim \sigma \sqrt{\Ex D_{1/2}(\nu_\beta \| \nu_0)},
	\end{align*}
	where $D_{1/2}(\cdot\|\cdot)$ is the R\'enyi divergence of order $1/2$ (see Section~\ref{ssec:info_theory} for the information-theoretic background). When the $X_t$'s are i.i.d.\ $\cN(0,\sigma^2)$ random variables, there is a matching lower bound for all values of $\beta$:
	\begin{align*}
		\phi(\beta) \asymp \sigma \sqrt{\Ex D_{1/2}(\nu_\beta\|\nu_0)}.
	\end{align*}
\end{enumerate}
Since $a > 0$, all the $X_t$'s are almost surely distinct, so the Gibbs measures $\nu_\beta$ converge to a (random) Dirac measure as $\beta \to \infty$. Consequently, the information divergences in the above inequalities converge to $\log |T|$, as expected. Similarly, since $\nu_\beta$ converges to the uniform measure $\nu_0$ as $\beta \to 0$, our inequalities have the right high-temperature behavior as well (wherever applicable). 

\subsection{Related work}

Approximation of the maxima of random processes on finite sets by their smoothed versions is a widely used technique in statistical physics and in probability theory \citep{Bovier2006,Adler2007,Talagrand_MF,Panchenko_SK,Chatterjee_2014}. However, in most of these applications, one is interested mainly in either the expected maximum itself (corresponding to the zero-temperature limit) or in various asymptotic quantities arising in the so-called \textit{thermodynamic limit} (when the size of the index set tends to infinity). To the best of our knowledge, the upper and lower bounds presented in this paper for $\beta > 0$ are new.

In the context of Sudakov minoration, we should mention recent work of \citet{Liu2022,Liu_2023}, where lower bounds are given for quantities of the form
\begin{align}\label{eq:Liu}
	\Ex \log \int_T e^{\langle t,Y \rangle}\mu(\dif t),
\end{align}
where $\mu$ is a probability measure supported on a compact convex set $T \subset \Reals^N$, $\langle \cdot,\cdot \rangle$ is the Euclidean inner product, and $Y$ is a random vector in $\Reals^N$. It is natural to view \eqref{eq:Liu} as a softmax-type relaxation of the expected ``$Y$-width'' of $T$, $\Ex[\max_{t \in T}\langle t,Y\rangle]$. Liu obtains Sudakov-type lower bounds on \eqref{eq:Liu} by first lower-bounding it by the expected supremum of a related process on a certain subset of $T$ and then using a deep convex-geometric result of \citet{Pajor} to lower-bound this expected supremum in terms of the packing numbers of $T$. The dimension reduction approach to Sudakov minoration due to \citet{Mendelson2019} alsy plays a key role here for obtaining dimension-free lower bounds. In contrast to our results, which are specific to Gaussian processes with finite index sets, the bounds of Liu apply to linear processes of the form $X_t = \langle t,Y \rangle$ with compact convex index sets, and $Y$ is not required to be a Gaussian random vector. 

\subsection{Organization of the paper}

The remainder of the paper is structured as follows. Section~\ref{sec:aux} introduces some concepts from information theory and from statistical physics that will be used later on. Upper and lower bounds on the quenched Gibbs average $g(\beta)$ are stated and proved in Section~\ref{sec:gibbs}, followed by the results on the quenched free energy $\phi(\beta)$ in Section~\ref{sec:free_energy}. Section~\ref{sec:REM} presents an application of our results to deriving upper and lower bounds on the quenched pressure in the Random Energy Model (REM), one of the basic models of physical systems with random disorder \citep{Bovier2006}.

\section{Preliminaries and auxiliary results}
\label{sec:aux}

\subsection{Some notions from information theory}
\label{ssec:info_theory}

Let $\cP(T)$ denote the set of all probability measures on $T$. For any two $\mu,\nu \in \cP(T)$ with $\nu$ strictly positive everywhere, the R\'enyi divergence of order $\alpha \in (0,\infty)$, $\alpha \neq 1$, is defined as
\begin{align*}
	D_\alpha(\mu \| \nu) := \frac{1}{\alpha - 1}\log \sum_{t \in T} \mu^\alpha(t)\nu^{1-\alpha}(t).
\end{align*}
The limit of $D_\alpha(\mu \|\nu)$ as $\alpha \to 1$ exists and is equal to the Kullback--Leibler divergence $D(\mu\|\nu)$, see, e.g., \citet{vanErven2014}. The Shannon entropy of $\mu \in \cP(T)$ is
\begin{align*}
	H(\mu) := - \sum_{t \in T} \mu(t)\log \mu(t).
\end{align*}
We will mainly deal with the situation when $\mu = \nu_\beta$ for some $\beta > 0$ and $\nu = \nu_0$, the uniform distribution on $T$. The expressions for the R\'enyi divergences $D_\alpha(\nu_\beta\|\nu_0)$ involve the log-partition function $\Lambda(\beta) := \log Z(\beta)$, whose first and second derivatives are given by 
$\Lambda'(\beta) = \langle X \rangle_\beta$ and $\Lambda''(\beta) = \langle X^2 \rangle_\beta - (\langle X \rangle_\beta)^2$. Since $\Lambda''(\beta) \ge 0$, it follows that $\Lambda(\beta)$ is a convex function of $\beta$. One useful consequence of this is the following:
\begin{lemma}\label{lm:nu_l2} The function $\beta \mapsto \|\nu_\beta\|^2_2 = \sum_{t \in T}\nu_\beta^2(t)$ is nondecreasing.
\end{lemma}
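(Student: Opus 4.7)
The plan is to express $\|\nu_\beta\|_2^2$ directly in terms of the log-partition function $\Lambda$, and then exploit the convexity of $\Lambda$ that was just emphasized in the preamble to the lemma.

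First, I would rewrite
\begin{align*}
\|\nu_\beta\|_2^2 = \sum_{t \in T} \frac{e^{2\beta X_t}}{Z(\beta)^2} = \frac{Z(2\beta)}{Z(\beta)^2},
\end{align*}
so that
\begin{align*}
\log \|\nu_\beta\|_2^2 = \Lambda(2\beta) - 2\Lambda(\beta).
\end{align*}
Since $\log$ is monotone, it suffices to show that $h(\beta) := \Lambda(2\beta) - 2\Lambda(\beta)$ is nondecreasing on $[0,\infty)$. Differentiating gives $h'(\beta) = 2\big(\Lambda'(2\beta) - \Lambda'(\beta)\big)$, and since $\Lambda'' \ge 0$ makes $\Lambda'$ nondecreasing, and $2\beta \ge \beta$ for $\beta \ge 0$, we get $h'(\beta) \ge 0$. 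This completes the argument.

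There is no real obstacle here; the only subtlety to mention is that the identity $\|\nu_\beta\|_2^2 = Z(2\beta)/Z(\beta)^2$ reduces the claim to a one-line consequence of the convexity of $\Lambda$, so the proof is essentially immediate once the right reformulation is found. (As a sanity check, one can also verify monotonicity probabilistically: $\frac{d}{d\beta}\|\nu_\beta\|_2^2 = 2\,\mathrm{Cov}_{\nu_\beta}(\nu_\beta, X)$, where the covariance is nonnegative by FKG/Chebyshev's sum inequality because $t \mapsto \nu_\beta(t)$ and $t \mapsto X_t$ are comonotone under $\nu_\beta$.)
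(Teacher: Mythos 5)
Your proof is correct and reaches the same key inequality as the paper, namely $\Lambda'(2\beta) - \Lambda'(\beta) \ge 0$ from convexity of $\Lambda$; the only cosmetic difference is that you first pass through the identity $\|\nu_\beta\|_2^2 = Z(2\beta)/Z(\beta)^2$ and differentiate the logarithm, whereas the paper differentiates $\|\nu_\beta\|_2^2$ directly and then factors out $\|\nu_\beta\|_2^2$ to reveal the same quantity. Your parenthetical Chebyshev/FKG sanity check is a genuinely different and equally valid route, but the main argument is essentially the paper's.
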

\begin{proof}Using the definition of $\nu_\beta(t)$, we calculate
	\begin{align*}
		\frac{\dif}{\dif \beta}\|\nu_\beta\|^2_2 &= 2 \sum_{t \in T} \nu_\beta(t)\frac{\dif}{\dif \beta}\nu_\beta(t) \\
		&= 2 \sum_{t \in T}\nu^2_\beta(t)\left(X_t  - \Lambda'(\beta)\right) \\
		&= 2 \|\nu_\beta\|^2_2 \left(\frac{1}{\|\nu_\beta\|^2_2}\sum_{t \in T}  X_t \nu^2_\beta(t) - \Lambda'(\beta)\right) \\
		&= 2 \|\nu_\beta\|^2_2 \left(\Lambda'(2\beta) - \Lambda'(\beta)\right) \\
		&\ge 0,
	\end{align*}
	where the inequality follows from the fact that $\Lambda'(\beta)$ is nondecreasing, as $\Lambda(\beta)$ is convex.\end{proof}
\noindent The quantity $\|\nu_\beta\|^2_2$, which is sometimes referred to as the \textit{participation ratio} (see, e.g., Section~5.3 of \citet{MezardMontanari2009}), can be expressed in terms of the partition function as follows:
\begin{align*}
	\|\nu_\beta\|^2_2 &= \frac{1}{Z^2(\beta)}\sum_{t \in T} e^{2\beta X_t} = \frac{Z(2\beta)}{Z^2(\beta)}.
\end{align*}
The negative logarithm of $\|\nu_\beta\|^2_2$ is equal to $H_2(\nu_\beta)$, the R\'enyi entropy of $\nu_\beta$ of order $2$.

With these preliminaries out of the way, we have
\begin{align}\label{eq:Gibbs_KL}
	D(\nu_\beta \| \nu_0) &= \log |T| - H(\nu_\beta) = \log |T| + \beta \langle X \rangle_\beta - \Lambda(\beta)
\end{align}
and
\begin{align}\label{eq:Gibbs_Renyi_alpha}
	D_\alpha(\nu_\beta\|\nu_0) = \log |T| + \frac{1}{\alpha - 1}\big(\Lambda(\alpha\beta) - \alpha\Lambda(\beta)\big).
\end{align}
In particular, specializing \eqref{eq:Gibbs_Renyi_alpha} to $\alpha = 1/2$ gives
\begin{align}\label{eq:Gibbs_Renyi_half}
	D_{1/2}(\nu_\beta \| \nu_0)  = \log |T| + \log \|\nu_{\beta/2} \|^2_2.
\end{align}

\subsection{A replica-symmetric representation of the quenched Gibbs average}

The following lemma relates the quenched Gibbs average $g(\beta)$ to the geometry of $(T,d)$ and to the Gibbs measure at inverse temperature $\beta$:

\begin{lemma}\label{lm:replica} The quenched Gibbs average can be expressed as
	\begin{align}\label{eq:qbeta_general}
		g(\beta) = \frac{\beta}{2}\sum_{s,t \in T} d^2(s,t)\Ex[\nu_\beta(s)\nu_\beta(t)].
	\end{align}
In particular, if the $X_t$'s are i.i.d.\ $\cN(0,\sigma^2)$ random variables, then
\begin{align}\label{eq:qbeta_iid}
	g(\beta) = \beta\sigma^2(1-r(\beta)),
\end{align}
where  $r(\beta) := \Ex\|\nu_\beta\|^2_2$.
\end{lemma}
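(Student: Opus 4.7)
The plan is to derive the identity by applying Gaussian integration by parts (Stein's lemma) to each term appearing in $g(\beta)=\sum_{t\in T}\Ex[X_t\nu_\beta(t)]$. The key input is the identity
\[
\Ex[X_t F(X)] \;=\; \sum_{s\in T}\Ex[X_tX_s]\,\Ex\!\left[\frac{\partial F}{\partial X_s}\right],
\]
valid for any sufficiently smooth $F:\Reals^T\to\Reals$ when $X=(X_t)_{t\in T}$ is a centered Gaussian vector. Applying this with $F(X)=\nu_\beta(t)=e^{\beta X_t}/Z(\beta)$ only requires computing the partial derivatives of $\nu_\beta(t)$, which is a short calculation: differentiating $e^{\beta X_t}/Z(\beta)$ in $X_s$ and simplifying gives the well-known ``softmax Jacobian''
\[
\frac{\partial \nu_\beta(t)}{\partial X_s} \;=\; \beta\,\nu_\beta(t)\bigl(\delta_{s,t}-\nu_\beta(s)\bigr).
\]

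Substituting this into the Stein identity and summing over $t$ yields
\[
g(\beta) \;=\; \beta \sum_{t\in T}\Ex[X_t^2]\,\Ex[\nu_\beta(t)] \;-\; \beta\sum_{s,t\in T}\Ex[X_sX_t]\,\Ex[\nu_\beta(s)\nu_\beta(t)].
\]
At this point I would introduce the covariance $C(s,t):=\Ex[X_sX_t]$, so that $d^2(s,t)=C(s,s)+C(t,t)-2C(s,t)$, and show that the right-hand side of \eqref{eq:qbeta_general} equals precisely the above expression. The verification is mechanical: expand $\frac{\beta}{2}\sum_{s,t}d^2(s,t)\Ex[\nu_\beta(s)\nu_\beta(t)]$ and use the normalization $\sum_{t\in T}\nu_\beta(t)=1$ (which also gives $\sum_{t}\nu_\beta(s)\nu_\beta(t)=\nu_\beta(s)$) to collapse the diagonal contributions $C(s,s)$ and $C(t,t)$ into $\beta\sum_t C(t,t)\Ex[\nu_\beta(t)]$, matching the first term above by symmetry in $s,t$.

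For the i.i.d.\ specialization \eqref{eq:qbeta_iid}, I would simply observe that $d^2(s,t)=2\sigma^2\mathbf{1}\{s\neq t\}$, so
\[
g(\beta)\;=\;\beta\sigma^2\sum_{s\ne t}\Ex[\nu_\beta(s)\nu_\beta(t)]\;=\;\beta\sigma^2\,\Ex\!\left[\Bigl(\textstyle\sum_{s}\nu_\beta(s)\Bigr)^{\!2}-\sum_s\nu_\beta^2(s)\right]\;=\;\beta\sigma^2\bigl(1-r(\beta)\bigr),
\]
using $\sum_s\nu_\beta(s)=1$ and the definition $r(\beta)=\Ex\|\nu_\beta\|_2^2$.

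The main obstacle, such as it is, is bookkeeping rather than conceptual: one must correctly split the Stein formula into the $s=t$ piece (which produces the $\Ex[X_t^2]$ terms) and the $s\ne t$ piece, and then carefully match the result to the expansion of $d^2(s,t)$. Everything else is routine manipulation once the softmax Jacobian is in hand.
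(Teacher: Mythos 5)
Your proposal is correct and follows essentially the same route as the paper: Gaussian integration by parts applied to the softmax Jacobian $\partial\nu_\beta(t)/\partial X_s = \beta\nu_\beta(t)(\delta_{s,t}-\nu_\beta(s))$, then a symmetrization step to rewrite the covariance combination as $\frac{1}{2}d^2(s,t)$. The only (inconsequential) difference is directional: the paper builds up to the symmetrized expression by inserting $\sum_{s,t}(\Ex[X_s^2]-\Ex[X_t^2])\Ex[\nu_\beta(s)\nu_\beta(t)]=0$, while you expand $d^2(s,t)=C(s,s)+C(t,t)-2C(s,t)$ and collapse the diagonal terms via $\sum_t\nu_\beta(t)=1$.
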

\begin{proof}Fix a realization $x = (x_t)_{t \in T}$ of $X$. For each $s \in T$, we can view $\nu_\beta(s)$ as a function of $x$:
	\begin{align*}
		\nu_\beta(s) = f_s(x) = \frac{e^{\beta x_s}}{\sum_{t \in T}e^{\beta x_t}},
	\end{align*}
	with
	\begin{align*}
		\frac{\partial f_s}{\partial x_t}(x) &= \beta\left(\frac{e^{\beta x_t}\boldsymbol{1}\{s = t\}}{\sum_{t' \in T}e^{\beta x_{t'}}} - \frac{e^{\beta x_s}e^{\beta x_t}}{\left(\sum_{t' \in T} e^{\beta x_{t'}}\right)^2}\right) \\
		&= \beta\big(\boldsymbol{1}\{s = t\} \nu_\beta(s) - \nu_\beta(s)\nu_\beta(t)\big).
	\end{align*}
Gaussian integration by parts \citep[Lemma~2.2.4]{Adler2007} then gives
\begin{align*}
	\Ex[X_s \nu_\beta(s)] &= \sum_{t \in T} \Ex[X_s X_t] \Ex\Big[\frac{\partial f_s}{\partial x_t}(X)\Big] \\
	&= \beta \sum_{t \in T} \Ex[X_s X_t] \Ex\big[\boldsymbol{1}\{s = t\} \nu_\beta(s) - \nu_\beta(s)\nu_\beta(t)\big].
\end{align*}
Summing over all $s \in T$, we obtain
\begin{align*}
	g(\beta) &= \sum_{s \in T} \Ex[X_s \nu_\beta(s)] \\
	&= \beta \sum_{s,t \in T} \Ex[X_s X_t] \Ex\big[\boldsymbol{1}\{s = t\} \nu_\beta(s) - \nu_\beta(s)\nu_\beta(t)\big] \\
	&= \beta \sum_{s,t \in T}\Big(\Ex[X^2_s]-\Ex[X_sX_t]\Big)\Ex[\nu_\beta(s)\nu_\beta(t)] \\
	&= \frac{\beta}{2} \sum_{s,t \in T} \Ex[(X_s - X_t)^2] \Ex[\nu_\beta(s)\nu_\beta(t)] \\
	&= \frac{\beta}{2} \sum_{s,t \in T}d^2(s,t)\Ex[\nu_\beta(s)\nu_\beta(t)],
\end{align*}
where we have made use of the relation
\begin{align*}
	\sum_{s,t \in T}\Ex[X^2_s-X^2_t]\Ex[\nu_\beta(s)\nu_\beta(t)] = 0
\end{align*}
(which follows by symmetry) and of the definition of $d^2(s,t)$.

When the $X_t$'s are i.i.d.\ $\cN(0,\sigma^2)$ random variables, we have $d^2(s,t) =2\sigma^2\boldsymbol{1}\{s \neq t\}$. Substituting this into \eqref{eq:qbeta_general} and using the fact that $\nu_\beta$ is a probability distribution results in \eqref{eq:qbeta_iid}. Moreover, as can be seen from the proof of Lemma~\ref{lm:nu_l2}, $\frac{\dif}{\dif \beta}\|\nu_\beta\|^2_2$ is an $L^2$ random variable, so $r'(\beta) = \frac{\dif}{\dif \beta}\Ex\|\nu_\beta\|^2_2 \ge 0$, where the interchange of derivative and expectation is justified by the dominated convergence theorem. \end{proof}

Results of this type are common in the literature on statistical physics of disordered systems (see, e.g., Lemma~1.3.11 in \citet{Talagrand_MF} or Lemma~1.1 in \citet{Panchenko_SK} in the context of the Sherrington--Kirkpatrick model), where they are referred to as \textit{replica-symmetric representations}: Let $(\tau^1,\tau^2)$ be a random element of $T \times T$ sampled from the product measure $\nu_\beta \otimes \nu_\beta$; $\tau^1$ and $\tau^2$ are independent replicas of the  system at inverse temperature $\beta$. Then we can write \eqref{eq:qbeta_general} as
	\begin{align*}
		g(\beta) = \Ex\langle X \rangle_\beta = \frac{\beta}{2} \Ex\langle d^2(\tau^1,\tau^2)\rangle_\beta,
	\end{align*}
	where $\langle\cdot\rangle_\beta$ is the expectation w.r.t.\ $\nu_\beta \otimes \nu_\beta$ and where $\Ex[\cdot]$ is the expectation  w.r.t.\ $X$.

\section{Upper and lower bounds on the quenched Gibbs average}
\label{sec:gibbs}

Our analysis of the quenched Gibbs average $g(\beta)$ relies crucially on the fact that it can be expressed as an expectation of $X$ first w.r.t.\ the Gibbs measure $\nu_\beta$ and then w.r.t.\ the randomness of $X$. To obtain an upper bound on $g(\beta)$, we make use of the Gibbs variational principle \citep{Dupuis_Ellis_1997} which, in the context of statistical physics, captures the Legendre--Fenchel duality between the relative entropy and the free energy. For the lower bound, we make fundamental use of the replica-symmetric representation of $g(\beta)$ in Lemma~\ref{lm:replica} and of Sudakov minoration.

\subsection{Upper bound via the Gibbs variational principle}

The Gibbs variational principle states that, for any function $f : T \to \Reals$,
\begin{align*}
	-\log \langle \nu_0, e^{-f} \rangle = \inf_{\mu \in \cP(T)} \left\{ \langle \mu,f \rangle + D(\mu \| \nu_0)\right\},
\end{align*}
where $\langle \mu, f\rangle$ denotes the expected value of $f$ w.r.t.\ $\mu$.

\begin{theorem}\label{thm:Gibbs_upper} The quenched Gibbs average is bounded from above as
\begin{align*}
	g(\beta) \le \sqrt{2\sigma^2 \Ex D(\nu_\beta\|\nu_0)}
\end{align*}
where $\sigma^2$ is the maximum of the variances ${\rm Var}[X_t]$.
\end{theorem}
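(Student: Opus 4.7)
The plan is to invoke the Gibbs variational principle at a suitable free parameter, specialize it to $\mu = \nu_\beta$, take expectations, control the log-moment generating function by Jensen's inequality plus a standard Gaussian bound, and finally optimize.

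Concretely, first I would apply the Gibbs variational principle with the function $f = -\lambda X$ for a free parameter $\lambda > 0$. Since the variational formula holds for every measure $\mu \in \cP(T)$, plugging in $\mu = \nu_\beta$ yields, for each realization of $X$,
\begin{align*}
\lambda \langle X \rangle_\beta - D(\nu_\beta \| \nu_0) \le \log \langle \nu_0, e^{\lambda X} \rangle = \log \Bigl(\tfrac{1}{|T|}\sum_{t \in T}e^{\lambda X_t}\Bigr).
\end{align*}
Taking expectations over $X$ and using Jensen's inequality (concavity of the logarithm) gives
\begin{align*}
\lambda g(\beta) - \Ex D(\nu_\beta\|\nu_0) \le \Ex \log \Bigl(\tfrac{1}{|T|}\sum_{t \in T}e^{\lambda X_t}\Bigr) \le \log \Bigl(\tfrac{1}{|T|}\sum_{t \in T}\Ex e^{\lambda X_t}\Bigr).
\end{align*}

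Next, I would use the Gaussian MGF: for each $t$, $\Ex e^{\lambda X_t} = e^{\lambda^2 {\rm Var}[X_t]/2} \le e^{\lambda^2 \sigma^2/2}$, so the right-hand side is at most $\lambda^2 \sigma^2 / 2$. Rearranging yields the one-parameter family of bounds
\begin{align*}
g(\beta) \le \frac{\lambda \sigma^2}{2} + \frac{\Ex D(\nu_\beta \| \nu_0)}{\lambda}.
\end{align*}

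Finally, I would optimize over $\lambda > 0$: the minimum occurs at $\lambda_* = \sqrt{2 \Ex D(\nu_\beta \| \nu_0)/\sigma^2}$ (assuming $\Ex D(\nu_\beta \| \nu_0) > 0$; otherwise the claim is trivial since $g(\beta) \ge 0$ by the replica-symmetric representation in Lemma~\ref{lm:replica}), and substituting gives the stated bound $g(\beta) \le \sqrt{2\sigma^2 \Ex D(\nu_\beta \| \nu_0)}$. I do not anticipate a serious obstacle: the only subtle step is the application of Jensen to pass the expectation inside the logarithm, which is standard, and the variance bound per coordinate is what forces $\sigma^2$ to be the \emph{maximum} of the variances rather than some average.
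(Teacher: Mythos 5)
Your proof is correct and follows essentially the same route as the paper: apply the Gibbs variational principle to $f = -\lambda X$ with $\mu = \nu_\beta$, take expectations, pass the expectation inside the logarithm by Jensen, bound the Gaussian MGF by $e^{\lambda^2\sigma^2/2}$, and optimize over $\lambda$. The only cosmetic difference is that the paper phrases the intermediate bound in terms of $\phi(\lambda)$, while you write it out directly; the substance is the same.
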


\begin{proof} Applying the Gibbs variational principle to the (random) function $f(t) := -\lambda X_t$ for some $\lambda > 0$ (to be chosen later) gives
\begin{align*}
	-\log\Bigg(\frac{1}{|T|}\sum_{t \in T} e^{\lambda X_t}\Bigg) \le -\lambda \langle X \rangle_\beta + D(\nu_\beta \| \nu_0).
\end{align*}
Rearranging and taking expectations w.r.t.\ $X$, we obtain $g(\beta) \le \frac{1}{\lambda} \Ex [D(\nu_\beta \| \nu_0)] + \phi(\lambda)$,
where $\phi(\lambda)$ can be estimated using Jensen's inequality and properties of Gaussian random variables:
\begin{align*}
	\phi(\lambda) &= \frac{1}{\lambda}\Ex\log\Bigg(\frac{1}{|T|}\sum_{t \in T}e^{\lambda X_t}\Bigg) \le \frac{1}{\lambda} \log \Bigg(\frac{1}{|T|}\sum_{t \in T}\Ex e^{\lambda X_t}\Bigg) \le \frac{\lambda \sigma^2}{2}.
\end{align*}
Combining the above and optimizing over $\lambda$ yields
\begin{align*}
	g(\beta) \le \inf_{\lambda > 0} \Big\{\frac{1}{\lambda} \Ex D(\nu_\beta \| \nu_0) + \frac{\lambda \sigma^2}{2}\Big\} = \sqrt{2\sigma^2 \Ex D(\nu_\beta\|\nu_0)},
\end{align*}
and the proof is complete.
\end{proof}
\noindent Using the expression \eqref{eq:Gibbs_KL} for $D(\nu_\beta \| \nu_0)$, we immediately obtain the following:

\begin{corollary} $g(\beta) \le \sqrt{2\sigma^2\big(\log |T|-\Ex H(\nu_\beta)\big)}$.
\end{corollary}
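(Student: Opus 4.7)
The corollary is an immediate consequence of Theorem~\ref{thm:Gibbs_upper} combined with the identity \eqref{eq:Gibbs_KL}. My plan is simply to substitute the entropy expression for the KL divergence into the bound just established.

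First, I would recall that the Shannon entropy characterization of relative entropy to a uniform measure gives, as stated in \eqref{eq:Gibbs_KL},
\begin{align*}
	D(\nu_\beta \| \nu_0) = \log |T| - H(\nu_\beta),
\end{align*}
which holds pointwise in the disorder $X$, since $\nu_0$ is the uniform distribution on the finite set $T$. Taking expectation with respect to $X$ on both sides, and using that $\log |T|$ is deterministic, yields
\begin{align*}
	\Ex D(\nu_\beta \| \nu_0) = \log |T| - \Ex H(\nu_\beta).
\end{align*}

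Plugging this identity into the bound $g(\beta) \le \sqrt{2\sigma^2 \Ex D(\nu_\beta\|\nu_0)}$ from Theorem~\ref{thm:Gibbs_upper} gives the claimed inequality
\begin{align*}
	g(\beta) \le \sqrt{2\sigma^2\big(\log |T| - \Ex H(\nu_\beta)\big)}.
\end{align*}

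Since everything here is a direct substitution using already-proved results, there is no real obstacle. The only thing worth noting is that the nonnegativity of relative entropy guarantees $\log |T| - \Ex H(\nu_\beta) \ge 0$, so the square root is well-defined; this is automatic from the fact that $H(\nu_\beta) \le \log |T|$ by the maximum entropy property of the uniform distribution on $T$.
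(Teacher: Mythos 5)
Your proposal is correct and follows exactly the paper's route: the corollary is obtained by substituting the identity \eqref{eq:Gibbs_KL}, $D(\nu_\beta\|\nu_0) = \log|T| - H(\nu_\beta)$, into the bound of Theorem~\ref{thm:Gibbs_upper} after taking expectations. Nothing further is needed.
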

Since the $X_t$'s are distinct a.s., the quenched entropy $\Ex H(\nu_\beta)$ vanishes in the zero-temperature limit $\beta \to \infty$, and we recover the well-known Gaussian maximal inequality with the sharp constant:
\begin{align}\label{eq:Gauss_max}
	\Ex\Big[\max_{t \in T}X_t\Big] \le \sqrt{2\sigma^2 \log |T|}.
\end{align}

\subsection{A low-temperature lower bound via Sudakov minoration}

Since the $X_t$'s are $a$-separated, i.e., $d(s,t) \ge a > 0$ for all distinct $s,t \in T$, we can lower-bound the expected maximum of $X$ via Sudakov minoration (see, e.g., Lemma~2.4.2 in \citet{Talagrand_2014}):
\begin{align}\label{eq:Sudakov}
	\Ex\Big[\max_{t \in T}X_t\Big] \ge ca\sqrt{\log |T|},
\end{align}
where $0 < c < 1$ is a universal constant (Lemma~5.5.6 in \citet{MarcusRosen2006} gives $c = \frac{1}{17}$). Our next result is a low-temperature relaxation of Sudakov minoration:

\begin{theorem}\label{thm:Gibbs_lower} There exists an inverse temperature threshold $0 < \beta_* < \infty$ that depends on the covariance structure of $(X_t)_{t \in T}$, such that
	\begin{align*}
		g(\beta) \ge c a \sqrt{\Ex D(\nu_\beta\|\nu_0)} \qquad \text{for all }  \beta \ge \beta_*
	\end{align*}
where $c > 0$ is the same constant as in \eqref{eq:Sudakov}.
\end{theorem}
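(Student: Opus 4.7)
The plan is to obtain a pointwise lower bound on $g(X;\beta)$ in terms of $\max_t X_t$ and the entropy $H(\nu_\beta)$, take expectations, insert the classical Sudakov estimate on $\Ex[\max_t X_t]$, and then pick $\beta_*$ large enough that the entropy correction gets absorbed into the gap between $\sqrt{\log|T|}$ and $\sqrt{\Ex D(\nu_\beta\|\nu_0)}$.

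Concretely, I start from the identity $\beta g(X;\beta) = \log Z(\beta) - H(\nu_\beta)$, which is just a rearrangement of \eqref{eq:Gibbs_KL} before averaging, together with the elementary inequality $\log Z(\beta) \ge \beta \max_t X_t$. These combine to give the pointwise bound $g(X;\beta) \ge \max_t X_t - H(\nu_\beta)/\beta$. Taking expectations, invoking Sudakov minoration \eqref{eq:Sudakov}, and using $\Ex H(\nu_\beta) = \log|T| - \Ex D(\nu_\beta\|\nu_0)$ (again from \eqref{eq:Gibbs_KL}) yields
\begin{align*}
g(\beta) \;\ge\; c a \sqrt{L} - \frac{L - \bar D}{\beta},
\end{align*}
where for brevity $L := \log|T|$ and $\bar D := \Ex D(\nu_\beta\|\nu_0)$.

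It remains to verify that $c a \sqrt{L} - (L - \bar D)/\beta \ge c a \sqrt{\bar D}$ for every $\beta$ above some threshold $\beta_*$ depending only on $|T|$ and $a$. Using $\sqrt{L} - \sqrt{\bar D} = (L - \bar D)/(\sqrt{L} + \sqrt{\bar D})$, the desired inequality is equivalent (for $\bar D < L$) to $\beta \ge (\sqrt{L} + \sqrt{\bar D})/(ca)$, and the uniform bound $\sqrt{\bar D} \le \sqrt{L}$ suggests the choice
\begin{align*}
\beta_* := \frac{2\sqrt{\log|T|}}{c a}.
\end{align*}
A short convexity/AM--GM check --- the function $\bar D \mapsto ca\sqrt{L} - (L - \bar D)/\beta - ca\sqrt{\bar D}$ is convex on $(0, L]$, and for $\beta \ge \beta_*$ its minimum over $[0, L]$ sits at $\bar D = L$ with value zero --- confirms that $\beta_*$ works uniformly in $\bar D \in [0, L]$. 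The main obstacle is precisely this last step: since $\bar D$ itself depends on $\beta$ in a complicated way, one must check that a single $\beta_*$ handles every admissible pair $(\beta, \bar D(\beta))$ simultaneously, and this elementary computation pins $\beta_*$ down up to the universal constant $c$ coming from Sudakov minoration.
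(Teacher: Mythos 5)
Your proof is correct, and it takes a genuinely different and more elementary route than the paper's. The paper relies on the replica-symmetric representation of $g(\beta)$ from Lemma~\ref{lm:replica} to sandwich $g(\beta)/\beta$ between $\frac{a^2}{2}(1-r(\beta))$ and $\frac{\Delta^2}{2}(1-r(\beta))$, picks $\beta_*$ so that $1-r(\beta)$ is small enough, and then runs a monotonicity argument: it shows that $\tilde g(\beta) := g(\beta) - ca\sqrt{\Ex D(\nu_\beta\|\nu_0)}$ is nonincreasing on $[\beta_*,\infty)$ and has nonnegative limit as $\beta\to\infty$ (by Sudakov), hence is nonnegative. You instead start from the pointwise identity $\beta\langle X\rangle_\beta = \Lambda(\beta) - H(\nu_\beta)$, combine it with $\Lambda(\beta)\ge\beta\max_t X_t$ to get $g(X;\beta)\ge\max_t X_t - H(\nu_\beta)/\beta$ (a sharpened form of the lower estimate in \eqref{eq:gibbs_bounds}), average, apply Sudakov directly, and then close the argument with the elementary algebraic observation that $ca\sqrt{L}-(L-\bar D)/\beta \ge ca\sqrt{\bar D}$ whenever $\beta\ge(\sqrt L+\sqrt{\bar D})/(ca)$, uniformly over $\bar D\in[0,L]$.

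The comparison is informative. Your argument buys a fully explicit and remarkably simple threshold
\begin{equation*}
\beta_* = \frac{2\sqrt{\log|T|}}{ca},
\end{equation*}
which depends only on $|T|$ and the minimum separation $a$, whereas the paper's $\beta_*$ is implicitly tied to the participation ratio $r(\beta)$ and the diameter $\Delta$. On the other hand, the paper's replica-representation route is the one that later extends cleanly to the i.i.d.\ case (Theorem~\ref{thm:Gibbs_lower_iid}), where the threshold constraint is removed. It is worth noting your uniformity observation explicitly (that $\bar D(\beta)\in[0,L]$ always, so a single $\beta_*$ covers all pairs $(\beta,\bar D(\beta))$); your convexity/monotonicity check of $\bar D\mapsto ca\sqrt{L}-(L-\bar D)/\beta-ca\sqrt{\bar D}$ on $[0,L]$ handles this correctly, since for $\beta\ge\beta_*$ the derivative $1/\beta-ca/(2\sqrt{\bar D})$ is nonpositive on $(0,L]$ and the value at $\bar D=L$ is zero. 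This is a valid, self-contained alternative proof.
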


\begin{proof} Let $\Delta := \max \{d(s,t) : s,t \in T\}$ denote the diameter of $(T,d)$. Then Lemma~\ref{lm:replica} allows us to bound $g(\beta)$ from above and from below as follows:
	\begin{align*}
		\frac{\beta a^2}{2}(1-r(\beta)) \le g(\beta) \le \frac{\beta\Delta^2}{2}(1-r(\beta)),
	\end{align*}
	where $r(\beta) = \Ex\|\nu_\beta\|^2_2$. The function $r(\beta)$ is nondecreasing by Lemma~\ref{lm:nu_l2} and bounded between $r(0) = \frac{1}{|T|}$ and $1$. Therefore, there exists some value $\beta_* > 0$, such that $1-r(\beta) \le \frac{c^2 a^2}{2\Delta^2}$ for all $\beta \ge \beta_*$. This implies that
	\begin{align}\label{eq:low_temp_regime}
		0 \le \frac{g(\beta)}{\beta} \le \frac{c^2a^2}{4}, \qquad \beta \ge \beta_*.
	\end{align}
Differentiating the function
\begin{align*}
	\tilde{g}(\beta) := g(\beta) - ca \sqrt{\Ex D(\nu_\beta\|\nu_0)} = g(\beta) - ca\sqrt{\beta(g(\beta)-\phi(\beta))}
\end{align*}
w.r.t.\ $\beta$ gives
\begin{align*}
	\tilde{g}'(\beta) 
	&= g'(\beta) - \frac{ca}{2}\frac{\frac{\dif}{\dif \beta} [\beta(g(\beta)-\phi(\beta))]}{\sqrt{\beta(g(\beta)-\phi(\beta))}} \\
	&= g'(\beta)\Bigg(1 - \frac{ca}{2}\frac{\beta}{\sqrt{\beta(g(\beta)-\phi(\beta))}}\Bigg) \\
	&\le g'(\beta)\Bigg(1 - \frac{ca}{2}\sqrt{\frac{\beta}{g(\beta)}} \Bigg) \\
	&\le 0,
\end{align*}
where we have used \eqref{eq:low_temp_regime} and the fact that $g'(\beta) \ge 0$, which follows from the convexity of $\beta \mapsto \beta \phi(\beta)$. Thus, for any $\beta \ge \beta_*$,
\begin{align*}
	\tilde{g}(\beta) \ge \lim_{\beta \to \infty}\tilde{g}(\beta) = \Ex\Big[\max_{t \in T}X_t\Big] - ca\sqrt{\log |T|} \ge 0,
\end{align*}
where the inequality is by Sudakov minoration.
\end{proof}

\subsection{A lower bound for i.i.d.\ processes}

When the $X_t$'s are i.i.d.\ $\cN(0,\sigma^2)$ random variables, the separation condition holds with $a^2 = 2\sigma^2$. In this setting, we can extend the range of our Sudakov-type lower bound on $g(\beta)$ to \textit{all} values of $\beta$, but with a slightly worse constant:

\begin{theorem}\label{thm:Gibbs_lower_iid} When $X_t$'s are i.i.d.\ $\cN(0,\sigma^2)$ random variables, we have
	\begin{align*}
		g(\beta) \ge c\sigma \sqrt{\Ex D(\nu_\beta \| \nu_0)} \qquad \text{for all } \beta \ge 0.
	\end{align*}
\end{theorem}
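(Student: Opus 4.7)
The plan is to establish the bound $g(\beta) \ge c\sigma\sqrt{\Ex D(\nu_\beta\|\nu_0)}$ for all $\beta \ge 0$ by studying the sign of the single auxiliary function
\begin{align*}
f(\beta) := g(\beta)^2 - c^2\sigma^2\,\Ex D(\nu_\beta\|\nu_0),
\end{align*}
showing that $f(0) = 0$, $\lim_{\beta\to\infty}f(\beta) \ge 0$, and that $f$ is unimodal (``$\cap$-shaped'') on $[0,\infty)$. Since $a = \sigma\sqrt{2}$ in the i.i.d.\ case, Theorem~\ref{thm:Gibbs_lower} already gives the desired inequality with room to spare for $\beta$ past a certain threshold; the new content is the extension to small $\beta$, which the unimodality argument will handle uniformly with both regimes in one sweep.

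For the derivative computation, I would use the identity $\Ex D(\nu_\beta\|\nu_0) = \log|T| + \beta g(\beta) - \Ex\Lambda(\beta)$ from \eqref{eq:Gibbs_KL} together with $\Ex\Lambda'(\beta) = g(\beta)$, which yields $\frac{d}{d\beta}\Ex D(\nu_\beta\|\nu_0) = \beta g'(\beta)$ and hence $f'(\beta) = g'(\beta)\big(2g(\beta) - c^2\sigma^2\beta\big)$. Plugging in the i.i.d.\ formula $g(\beta) = \beta\sigma^2(1-r(\beta))$ from \eqref{eq:qbeta_iid} gives $f'(\beta) = \sigma^2\beta g'(\beta)\big(2(1-r(\beta)) - c^2\big)$. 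Since $g'(\beta) = \Ex\Lambda''(\beta) \ge 0$, the sign of $f'$ is controlled entirely by $r(\beta)$ relative to the threshold $1 - c^2/2$. By Lemma~\ref{lm:nu_l2}, $r$ is nondecreasing, starts at $r(0) = 1/|T|$, and tends to $1$ as $\beta \to \infty$; since $c < 1$ and $|T| \ge 2$, we have $r(0) \le 1/2 < 1 - c^2/2 < 1$, so there exists a unique $\beta_\star \in (0,\infty)$ with $r(\beta_\star) = 1 - c^2/2$, and $f$ is nondecreasing on $[0,\beta_\star]$ and nonincreasing on $[\beta_\star,\infty)$.

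Finally I would evaluate the two boundary values. Clearly $f(0) = 0$, while $\lim_{\beta\to\infty} g(\beta) = \Ex[\max_t X_t]$ and $\lim_{\beta\to\infty}\Ex D(\nu_\beta\|\nu_0) = \log|T|$ (because the $X_t$'s are almost surely distinct); Sudakov minoration \eqref{eq:Sudakov} with $a = \sigma\sqrt{2}$ then yields $\lim_{\beta\to\infty}f(\beta) \ge 2c^2\sigma^2\log|T| - c^2\sigma^2\log|T| = c^2\sigma^2\log|T| \ge 0$. By the $\cap$-shape, $f(\beta) \ge \min\{f(0),\,\lim_{\beta\to\infty}f(\beta)\} = 0$ for all $\beta \ge 0$, giving the claim; the edge case $|T| = 1$ is trivial. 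The main obstacle is arranging the computation so that the sign of $f'$ is governed by a \emph{single} monotone scalar; this is possible only because the i.i.d.\ formula collapses the replica sum in Lemma~\ref{lm:replica} down to $1 - r(\beta)$, and is precisely the reason the same global argument does not obviously extend to general Gaussian processes without passing through the threshold $\beta_*$ of Theorem~\ref{thm:Gibbs_lower}.
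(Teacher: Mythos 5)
Your proof is correct, and it takes a genuinely different route from the paper. The paper proves this theorem by splitting into two regimes and using distinct arguments in each: for $\beta \le \beta_*$ (where $\beta_*$ is the same threshold you define by $r(\beta_*) = 1 - c^2/2$) it manipulates $1 - r(\beta) \ge c^2/2$ algebraically and then invokes $\Ex D(\nu_\beta\|\nu_0) \le \beta g(\beta)$, while for $\beta \ge \beta_*$ it repeats the monotone-function argument from Theorem~\ref{thm:Gibbs_lower} with $a = \sigma\sqrt{2}$. Your single auxiliary function $f(\beta) = g(\beta)^2 - c^2\sigma^2\,\Ex D(\nu_\beta\|\nu_0)$, shown to be $\cap$-shaped with nonnegative endpoints, handles both regimes in one pass and is cleaner. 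What the comparison buys is not just elegance: the paper's high-temperature argument actually yields only $g(\beta) \ge \frac{c}{\sqrt 2}\sigma\sqrt{\Ex D(\nu_\beta\|\nu_0)}$ (its own Remark concedes a degraded constant there, so the stated theorem is achieved only up to that loss), whereas your unimodality argument delivers the stated constant $c$ uniformly: on $[0,\beta_\star]$ you get $f \ge f(0) = 0$, and on $[\beta_\star,\infty)$ you get $f \ge \lim_{\beta\to\infty}f(\beta) = (\Ex\max_t X_t)^2 - c^2\sigma^2\log|T| \ge 2c^2\sigma^2\log|T| - c^2\sigma^2\log|T| \ge 0$ by Sudakov with $a = \sigma\sqrt{2}$. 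Both proofs rest on the same ingredients (Lemma~\ref{lm:replica}, monotonicity of $r$ from Lemma~\ref{lm:nu_l2}, $g' = \Ex\Lambda'' \ge 0$, and Sudakov minoration at $\beta\to\infty$), but your bookkeeping via $f'(\beta) = \sigma^2\beta\,g'(\beta)\bigl(2(1-r(\beta)) - c^2\bigr)$ is the more efficient way to combine them. One cosmetic point: to pass from $f(\beta) \ge 0$ to the stated inequality you implicitly use $g(\beta) \ge 0$, which is immediate from \eqref{eq:qbeta_iid} but worth one clause.
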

\begin{remark} As will become evident from the proof, the lower bound with the worse constant $c/2$ holds only in the high-temperature regime $\beta \le \beta_*$, where the threshold $\beta_*$ is determined in exactly the same manner as in Theorem~\ref{thm:Gibbs_lower}. In the low-temperature regime $\beta > \beta_*$, the constant is still $c$. \end{remark}

\begin{proof} In the i.i.d.\ case, $a^2 = \Delta^2 = 2\sigma^2$. Since $r(\beta)$ is nondecreasing, the same choice of $\beta_*$ as in the proof of Theorem~\ref{thm:Gibbs_lower} ensures that
	\begin{align*}
		1 - r(\beta) \ge \frac{c^2}{2}, \qquad \beta \le \beta_*.
	\end{align*}
This is equivalent to the inequality $1-r(\beta) \ge \frac{c}{\sqrt{2}}\sqrt{1-r(\beta)}$ for $\beta \le \beta_*$. For any such $\beta$ we have
\begin{align*}
	g(\beta) = \beta \sigma^2 (1-r(\beta)) \ge c\sigma \sqrt{\frac{1}{2}\beta^2 \sigma^2(1-r(\beta))} = c\sigma \sqrt{\frac{1}{2}\beta g(\beta)} \ge c\sigma \sqrt{\frac{1}{2}\Ex D(\nu_\beta \| \nu_0)},
\end{align*}
where we have used Lemma~\ref{lm:replica} and the fact that $\Ex D(\nu_\beta \| \nu_0) = \beta (g(\beta) - \phi(\beta)) \le \beta g(\beta)$. This establishes the lower bound in the high-temperature regime. The analysis of the low-temperature regime $\beta \ge \beta_*$ is exactly the same as in the proof of Theorem~\ref{thm:Gibbs_lower}.
\end{proof}

\section{Upper and lower bounds on the quenched free energy}
\label{sec:free_energy}

The analysis of the quenched free energy $\phi(\beta)$ is complicated by the fact that, unlike the quenched Gibbs average $g(\beta)$, it cannot be expressed as an expected value of $X$ w.r.t.\ a random measure on $T$. The appropriate strategy here is to capitalize on the easily verified fact that $g(\beta)$ is equal to the derivative of $\beta \phi(\beta)$ and use the fundamental theorem of calculus in conjunction with the already available upper and lower bounds on $g(\beta)$.

\subsection{Upper bound}

The following upper bound is a counterpart of Theorem~\ref{thm:Gibbs_upper} for $g(\beta)$:

\begin{theorem}\label{thm:phi_upper} The quenched free energy is bounded from above by
	\begin{align*}
		\phi(\beta) \le \sqrt{2\sigma^2 \Ex D_{1/2}(\nu_\beta\|\nu_0)},
	\end{align*}
where $D_{1/2}(\cdot\|\cdot)$ is the R\'enyi divergence of order $1/2$.
\end{theorem}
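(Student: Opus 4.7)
The plan is to reduce the theorem to Theorem~\ref{thm:Gibbs_upper} via the fundamental theorem of calculus, and then convert the order-$1$ divergence that emerges into the order-$1/2$ divergence at the endpoint $\beta$. Since $g(\beta) = \frac{\dif}{\dif \beta}[\beta\phi(\beta)]$ and $\beta\phi(\beta) \to 0$ as $\beta \to 0$, one has $\beta\phi(\beta) = \int_0^\beta g(s)\,\dif s$. Substituting the upper bound $g(s) \le \sqrt{2\sigma^2\,\Ex D(\nu_s\|\nu_0)}$ from Theorem~\ref{thm:Gibbs_upper} and applying the Cauchy--Schwarz inequality in the form $\int_0^\beta \sqrt{h(s)}\,\dif s \le \sqrt{\beta\int_0^\beta h(s)\,\dif s}$ yields
\begin{align*}
\phi(\beta)^2 \le \frac{2\sigma^2}{\beta}\int_0^\beta \Ex D(\nu_s \| \nu_0)\,\dif s.
\end{align*}

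The remaining task is to show that this time-averaged KL divergence is dominated by $\Ex D_{1/2}(\nu_\beta \| \nu_0)$. Using \eqref{eq:Gibbs_KL}, we have $\Ex D(\nu_s\|\nu_0) = s\,g(s) - s\phi(s)$, and integration by parts produces
\begin{align*}
\int_0^\beta \Ex D(\nu_s\|\nu_0)\,\dif s = \beta^2\phi(\beta) - 2\int_0^\beta s\phi(s)\,\dif s.
\end{align*}
The function $F(s) := s\phi(s) = \Ex\log Z(s) - \log|T|$ is convex in $s$ with $F(0)=0$, because it is the expectation of a (pathwise) convex function of $s$. Jensen's inequality applied to a random variable uniformly distributed on $[0,\beta]$ then yields $\int_0^\beta F(s)\,\dif s \ge \beta\,F(\beta/2) = \frac{1}{2}\beta^2\phi(\beta/2)$, which delivers $\int_0^\beta \Ex D(\nu_s\|\nu_0)\,\dif s \le \beta^2\bigl(\phi(\beta) - \phi(\beta/2)\bigr)$.

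To close the argument, one reads off from \eqref{eq:Gibbs_Renyi_half} together with the definition $\Ex\log Z(\beta) = \beta\phi(\beta) + \log|T|$ the short identity $\Ex D_{1/2}(\nu_\beta\|\nu_0) = \beta\bigl(\phi(\beta) - \phi(\beta/2)\bigr)$. Combining with the previous bound gives $\frac{1}{\beta}\int_0^\beta \Ex D(\nu_s\|\nu_0)\,\dif s \le \Ex D_{1/2}(\nu_\beta\|\nu_0)$, and the desired inequality $\phi(\beta) \le \sqrt{2\sigma^2\,\Ex D_{1/2}(\nu_\beta\|\nu_0)}$ follows at once. The main conceptual step---and the only place the Rényi index drops from $1$ down to $1/2$---is invoking convexity of $s \mapsto s\phi(s)$ (rather than of $\phi$ itself) in Jensen's inequality; everything else is routine bookkeeping with the tools already in place.
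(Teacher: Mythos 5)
Your proof is correct and follows essentially the same route as the paper: write $\beta\phi(\beta)=\int_0^\beta g$, apply Theorem~\ref{thm:Gibbs_upper} and Cauchy--Schwarz (the paper calls the same step Jensen), integrate by parts, invoke convexity of $\psi(\beta)=\beta\phi(\beta)$ at the midpoint, and identify $\psi(\beta)-2\psi(\beta/2)$ with $\Ex D_{1/2}(\nu_\beta\|\nu_0)$. The only cosmetic difference is that you read off the final identity directly from \eqref{eq:Gibbs_Renyi_half}, whereas the paper re-derives it by expanding $\psi(\beta)-2\psi(\beta/2)$ in terms of partition functions.
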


\begin{proof} It is convenient to work with the function $\psi(\beta) := \beta\phi(\beta) = \Lambda(\beta) - \log |T|$, since $g(\beta) = \psi'(\beta)$. Define the shorthand $D(\beta) := \Ex D(\nu_\beta\|\nu_0)$. A straightforward computation gives
	\begin{align}\label{eq:D_vs_psi}
		D(\beta) = \beta \psi'(\beta) - \psi(\beta).
	\end{align}
		Then
	\begin{align*}
		\psi(\beta) &= \int^\beta_0 g(\lambda)\dif \lambda \le \int^\beta_0 \sqrt{2\sigma^2 D(\lambda)}\dif\lambda \le \beta\sqrt{2\sigma^2}\cdot \sqrt{\frac{1}{\beta}\int^\beta_0 D(\lambda)\dif \lambda},
	\end{align*}
	where the first step is by the fundamental theorem of calculus, the second uses Theorem~\ref{thm:Gibbs_upper}, and the third is by Jensen's inequality. Using \eqref{eq:D_vs_psi}, integration by parts, and convexity of $\psi$, we obtain
	\begin{align*}
		\frac{1}{\beta}\int^\beta_0 D(\lambda)\dif \lambda = \frac{1}{\beta}\int^\beta_0 (\lambda \psi'(\lambda) - \psi(\lambda))\dif\lambda = \psi(\beta) - \frac{2}{\beta}\int^\beta_0 \psi(\lambda)\dif\lambda \le \psi(\beta) - 2 \psi\left(\frac{\beta}{2}\right).
	\end{align*}
	Moreover, by the definition of $\psi$, 
	\begin{align*}
		\psi(\beta) - 2 \psi\left(\frac{\beta}{2}\right) &= \log |T| + \Ex \log\Big(\sum_{t \in T}e^{\beta X_t}\Big) - 2\,\Ex \log\Big(\sum_{t \in T}e^{\beta X_t/2}\Big) \\
		&= \log |T| + \Ex \log Z(\beta) - 2\, \Ex \log \sum_{t \in T} (Z(\beta)\nu_\beta(t))^{1/2} \\
		&= \log |T| - 2\, \Ex \log \sum_{t \in T} (\nu_\beta(t))^{1/2}.
	\end{align*}
Recognizing the latter expression as the expectation of the R\'enyi divergence $D_{1/2}(\nu_\beta\|\nu_0)$ in \eqref{eq:Gibbs_Renyi_half} and using this in our bound for $\psi(\beta)$, we obtain the desired estimate.
\end{proof}

\subsection{A lower bound for i.i.d.\ processes}

When the $X_t$'s are $\cN(0,\sigma^2)$ random variables, the inequality in Theorem~\ref{thm:phi_upper} can be reversed:

\begin{theorem} When $X_t$'s are i.i.d.\ $\cN(0,\sigma^2)$ random variables, we have
	\begin{align*}
		\phi(\beta) \ge \frac{c\sigma}{2} \sqrt{\Ex D_{1/2}(\nu_\beta \| \nu_0)} \qquad \text{for all } \beta \ge 0
	\end{align*}
	where $c$ is the constant in the Sudakov minoration bound \eqref{eq:Sudakov}.
\end{theorem}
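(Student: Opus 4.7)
The natural approach is to mirror the proof of Theorem \ref{thm:phi_upper}: integrate the pointwise lower bound $g(\lambda) \ge c\sigma\sqrt{\Ex D(\nu_\lambda\|\nu_0)}$ from Theorem \ref{thm:Gibbs_lower_iid} over $\lambda \in [0,\beta]$ and try to recover $\sqrt{\Ex D_{1/2}(\nu_\beta\|\nu_0)} = \sqrt{\psi(\beta) - 2\psi(\beta/2)}$, where $\psi(\beta) := \beta\phi(\beta)$. I expect this to be the main obstacle: passing a lower bound through the concave $\sqrt{\cdot}$ needs a ``reverse Jensen'' step, and the most obvious substitute---using monotonicity of $\lambda \mapsto \Ex D(\nu_\lambda\|\nu_0)$ to bound $\int_0^\beta \sqrt{\Ex D(\nu_\lambda\|\nu_0)}\,\dif\lambda$ from below by $(\beta/2)\sqrt{\Ex D(\nu_{\beta/2}\|\nu_0)}$---then collides with convexity of $\psi$, which forces $\Ex D(\nu_{\beta/2}\|\nu_0) \le \psi(\beta) - 2\psi(\beta/2)$ and sends the argument in the wrong direction.

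The plan is to bypass the integration entirely by first establishing the pointwise shortcut $\phi(\beta) \ge g(\beta)/2$. By \eqref{eq:qbeta_iid}, $g(\lambda) = \lambda\sigma^2(1-r(\lambda))$ with $r(\lambda) := \Ex\|\nu_\lambda\|_2^2$, and by Lemma \ref{lm:nu_l2} (together with the dominated-convergence argument at the end of the proof of Lemma \ref{lm:replica}) the map $\lambda \mapsto r(\lambda)$ is nondecreasing, so $g(\lambda)/\lambda = \sigma^2(1-r(\lambda))$ is nonincreasing. Bounding $1-r(\lambda) \ge 1-r(\beta)$ for $\lambda \in [0,\beta]$ and integrating,
\begin{align*}
	\psi(\beta) = \sigma^2\int_0^\beta \lambda(1-r(\lambda))\,\dif\lambda \ge \sigma^2(1-r(\beta))\cdot\frac{\beta^2}{2} = \frac{\beta\, g(\beta)}{2},
\end{align*}
which upon dividing by $\beta$ yields $\phi(\beta) \ge g(\beta)/2$. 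This is the secant-vs-tangent comparison specialized to the particular convex $\psi$ produced by an i.i.d.\ process, and it is genuinely specific to the i.i.d.\ structure: $g(\lambda)/\lambda$ need not be nonincreasing for general Gaussian processes.

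The remainder of the argument is immediate. Theorem \ref{thm:Gibbs_lower_iid} provides $g(\beta) \ge c\sigma\sqrt{\Ex D(\nu_\beta\|\nu_0)}$, and the standard monotonicity of the R\'enyi divergence in its order parameter gives $D_{1/2}(\nu_\beta\|\nu_0) \le D_1(\nu_\beta\|\nu_0) = D(\nu_\beta\|\nu_0)$ pointwise, so $\Ex D(\nu_\beta\|\nu_0) \ge \Ex D_{1/2}(\nu_\beta\|\nu_0)$. Concatenating,
\begin{align*}
	\phi(\beta) \ge \frac{g(\beta)}{2} \ge \frac{c\sigma}{2}\sqrt{\Ex D(\nu_\beta\|\nu_0)} \ge \frac{c\sigma}{2}\sqrt{\Ex D_{1/2}(\nu_\beta\|\nu_0)},
\end{align*}
which is the claimed bound.
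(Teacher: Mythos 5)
Your proof is correct and follows essentially the same route as the paper: establish $\psi(\beta) \ge \tfrac{\beta g(\beta)}{2}$ via the i.i.d.\ representation $g(\lambda)=\sigma^2\lambda(1-r(\lambda))$ and monotonicity of $r$, then apply Theorem~\ref{thm:Gibbs_lower_iid} and the monotonicity of $D_\alpha$ in $\alpha$. The only cosmetic difference is that you bound $r(\lambda)\le r(\beta)$ directly under the integral, whereas the paper reaches the same estimate $\int_0^\beta \lambda r(\lambda)\,\dif\lambda \le \tfrac{\beta^2}{2}r(\beta)$ via integration by parts.
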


\begin{proof} We start by applying the fundamental theorem of calculus, but then invoke Lemma~\ref{lm:replica}:
	\begin{align*}
		\psi(\beta) &= \int^\beta_0 g(\lambda) \dif\lambda = \sigma^2 \int^\beta \lambda(1-r(\lambda)) \dif \lambda = \sigma^2 \Big(\frac{\beta^2}{2} - \int^\beta_0 \lambda r(\lambda)\dif\lambda\Big).
	\end{align*}
Integrating by parts and using the monotonicity of $r(\lambda)$, we obtain
\begin{align*}
	\int^\beta_0 \lambda r(\lambda)\dif \lambda = \frac{\beta^2}{2}r(\beta) - \int^\beta_0 \frac{\lambda^2}{2} r'(\lambda)\dif \lambda \le \frac{\beta^2}{2}r(\beta).
\end{align*}
Substituting this into our expression for $\psi(\beta)$ and using Theorem~\ref{thm:Gibbs_lower_iid} gives
\begin{align*}
	\psi(\beta) \ge \frac{\beta^2 \sigma^2}{2}(1-r(\beta)) = \frac{\beta g(\beta)}{2} \ge \frac{c\beta \sigma}{2} \sqrt{\Ex D(\nu_\beta\|\nu_0)},
\end{align*}
where $c$ is the constant in \eqref{eq:Sudakov}. Since the R\'enyi divergence $D_\alpha(\cdot\|\cdot)$ is an increasing function of $\alpha$ \citep{vanErven2014}, we obtain the claimed lower bound.
\end{proof}
\noindent The appearance of the R\'enyi divergence in upper and lower bounds on the quenched free energy is rather natural from the viewpoint of statistical physics, where R\'enyi entropy is used to quantify the sensitivity of equilibrium free energy to changes in temperature \citep{Baez_2022}.

\subsection{A Sudakov-type lower bound without independence}

When the $X_t$'s are no longer independent, it is still possible obtain a Sudakov-type lower bound on a quantity closely related to the quenched free energy $\phi(\beta)$. To state it, we first need to introduce additional notation. For a centered Gaussian process $X = (X_t)_{t \in T}$ and for a subset $A \subseteq T$, let $\Phi_\beta(X; A)$ denote the $\beta$-softmax of $(X_t)_{t \in A}$:
\begin{align*}
	\Phi_\beta(X; A) := \frac{1}{\beta}\log \Big(\sum_{t \in A}e^{\beta X_t}\Big).
\end{align*}
It is easy to see that $\Phi_\beta(X;\cdot)$ is monotone w.r.t.\ set inclusion: if $A \subseteq A'$, then $\Phi_\beta(X; A) \le \Phi_\beta(X; A')$.  We then have the following inequality, which can be viewed as a ``softmax'' relaxation of a result due to \citet{Talagrand1992}:

\begin{theorem}\label{thm:super_Sudakov} Let $S$ be a $4\sigma$-packing of $(T,d)$, i.e., $d(s,t) \ge 4\sigma$ for all $s,t \in S$ with $s \neq t$. Then
\begin{align}\label{eq:soft_super_Sudakov}
	\Ex \Phi_\beta\Big(X; \bigcup_{s \in S}B(s,\sigma)\Big) \ge  \sigma \Ex \Phi_{\beta\sigma}(G; S) + \frac{1}{|S|}\sum_{s \in S} \Ex \Phi_\beta(X; B(s,\sigma)),
\end{align}
where $G = (G_t)_{t \in T}$ consists of independent $\cN(0,1)$ random variables and where $B(s,\sigma) := \{t \in T: d(s,t) \le \sigma \}$ is the closed ball of radius $\sigma$ centered at $s$.
\end{theorem}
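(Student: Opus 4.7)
The plan is to prove \eqref{eq:soft_super_Sudakov} by combining a process-level Gaussian comparison (in the spirit of Sudakov--Fernique, extended to soft maxima) with a convexity-and-symmetry argument. Since $S$ is a $4\sigma$-packing and each $B(s,\sigma)$ has radius $\sigma$, the balls $\{B(s,\sigma)\}_{s \in S}$ are pairwise disjoint, and writing $\pi(t)$ for the unique $s \in S$ with $t \in B(s,\sigma)$, one gets $\Phi_\beta(X; \bigcup_s B(s,\sigma)) = \frac{1}{\beta}\log\sum_s \sum_{t \in B(s,\sigma)} e^{\beta X_t}$. Within each ball one further writes $X_t = X_s + D_t$ with $D_t := X_t - X_s$, so that $\Phi_\beta(X; B(s,\sigma)) = X_s + L_s$ with $L_s := \Phi_\beta(D; B(s,\sigma))$.

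The core construction is an auxiliary Gaussian process $\tilde{X}$ on $T_S := \bigcup_s B(s,\sigma)$ that decouples the local ball structure from a calibrated cross-ball noise. Concretely, for each $s \in S$ let $D^{(s)}$ be an independent Gaussian copy of $(X_t - X_s)_{t \in B(s,\sigma)}$, with the copies independent across $s$ and independent also of i.i.d.\ $(G_s)_{s \in S} \sim \cN(0,1)$; then set $\tilde{X}_t := D^{(\pi(t))}_t + \sigma G_{\pi(t)}$. A short computation shows $\Ex(\tilde{X}_t - \tilde{X}_u)^2 \le \Ex(X_t - X_u)^2$ for all $t,u \in T_S$: within the same ball both sides equal $d^2(t,u)$; for $t \in B(s,\sigma)$ and $u \in B(s',\sigma)$ with $s \neq s'$, the $4\sigma$-packing forces $d(t,u) \ge 2\sigma$, whence $\Ex(\tilde X_t - \tilde X_u)^2 = d^2(t,s) + d^2(u,s') + 2\sigma^2 \le 4\sigma^2 \le d^2(t,u)$. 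The soft-max extension of Sudakov--Fernique (which follows from Gaussian interpolation using $\nabla^2 \Phi_\beta = \beta(\operatorname{diag}(\nu_\beta) - \nu_\beta \nu_\beta^\Tr) \succeq 0$, in the same flavor as the integration-by-parts computation in Lemma~\ref{lm:replica}) then gives $\Ex\Phi_\beta(\tilde{X}; T_S) \le \Ex\Phi_\beta(X; T_S)$.

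It remains to lower-bound $\Ex\Phi_\beta(\tilde{X}; T_S)$ by the right-hand side of \eqref{eq:soft_super_Sudakov}. Since $\sigma G_{\pi(t)}$ is constant on each ball, factoring the inner sum gives $\Phi_\beta(\tilde{X}; T_S) = \frac{1}{\beta}\log\sum_s e^{\beta(\sigma G_s + L^{(s)})}$, where $L^{(s)} := \Phi_\beta(D^{(s)}; B(s,\sigma))$ is independent across $s$ and of $G$, equidistributed with $L_s$, and has mean $m_s := \Ex\Phi_\beta(X; B(s,\sigma))$ (using $\Ex X_s = 0$). Conditioning on $G$ and applying Jensen to the convex map $L \mapsto \log\sum_s e^{\beta(\sigma G_s + L^{(s)})}$ replaces each $L^{(s)}$ by $m_s$. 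The resulting function $m \mapsto \Ex_G \log\sum_s e^{\beta(\sigma G_s + m_s)}$ is symmetric in $m$ (since the $G_s$ are i.i.d.) and convex; averaging over coordinate permutations and applying Jensen once more lower-bounds it by its value at the uniform vector $\bar m \boldsymbol{1}$, which evaluates to $\beta\bar m + \beta\sigma\Ex\Phi_{\beta\sigma}(G; S)$ with $\bar m := \frac{1}{|S|}\sum_s m_s$. Chaining the inequalities delivers \eqref{eq:soft_super_Sudakov}.

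The main obstacle is the design of $\tilde{X}$: replacing only the correlated center values $X_s$ by independent $\sigma G_s$ (rather than replacing the whole process by an independent copy) is what makes the cross-ball variance inequality go through in the correct direction, and the choice $\sigma$ for the standard deviation of the $G_s$'s is precisely calibrated to the $4\sigma$ packing scale, so that the extra variance $2\sigma^2$ introduced by the independent $G$'s is absorbed by the lower bound $d^2(t,u) \ge 4\sigma^2$. A secondary technical point is the Gaussian interpolation proof of the soft-max Sudakov--Fernique comparison, which must be executed carefully on a process built from several independent Gaussian components but is cleanest via the explicit second-derivative identity noted above.
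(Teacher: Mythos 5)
Your proof is correct and follows essentially the same strategy as the paper: the auxiliary process $\tilde X_t = D^{(\pi(t))}_t + \sigma G_{\pi(t)}$ is the same as the paper's $Z_t = X^{(s)}_t - X^{(s)}_s + \sigma G_s$, the cross-ball variance comparison $\Ex(\tilde X_t - \tilde X_u)^2 \le d^2(t,u)$ is verified by the same triangle-inequality argument, and the Slepian/Sudakov--Fernique step for soft maxima via Gaussian interpolation is likewise the paper's. The only organizational difference is in the last stage: the paper factors out $\sigma\Phi_{\beta\sigma}(G;S)$ and applies a single Jensen step under the random Gibbs weights $\tilde\nu(s)$, then uses independence and $\Ex[\tilde\nu(s)] = 1/|S|$; you instead condition on $G$, apply Jensen over the independent local terms $L^{(s)}$ to pass to their means $m_s$, and then a second convexity-plus-permutation-symmetry argument to replace $m$ by $\bar m \boldsymbol{1}$. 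Both routes are just Jensen plus symmetry and yield the identical bound.
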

\begin{proof} We adapt the proof strategy from \citet[Lemma~5.5.7]{MarcusRosen2006}. Since $S$ is a $4\sigma$-packing subset of $T$, the balls $B(s,\sigma)$ centered on the points $s \in S$ are disjoint. Let  $X^{(s)} = (X^{(s)}_t)_{t \in T}$ be independent copies of $(X_t)_{t \in T}$, one for each $s \in S$. Also, let $G = (G_t)_{t \in T}$ be a collection of  $\cN(0,1)$ random variables independent of everything else. Each $t \in T' := \cup_{s \in S}B(s,\sigma)$ belongs to exactly one of the balls $B(s,\sigma)$; for such $t$, define $Z_t := X^{(s)}_t - X^{(s)}_s + \sigma G_s$. If $t,t'$ are in the same ball $B(s,\sigma)$, then $Z_t - Z_{t'}$ is equal in law to $X_t - X_{t'}$, so $\Ex|Z_t - Z_{t'}|^2 = \Ex|X_t - X_{t'}|^2$. If $t$ and $t'$ belong to disjoint balls, then
\begin{align*}
	\Ex|Z_t - Z_{t'}|^2 &= \Ex|X^{(s)}_t-X^{(s)}_s|^2 + \Ex|X^{(s')}_{t'} - X^{(s')}_{s'}|^2 + \sigma^2 \Ex|G_s - G_{s'}|^2 
\end{align*}
for some $s,s' \in S$ with $s \neq s'$. Then $\Ex |Z_t - Z_{t'}|^2 \le 4\sigma^2$, but, since $t,t'$ are in disjoint balls, we also have $\Ex |X_t - X_{t'}|^2 \ge 4\sigma^2$. Consequently, the centered Gaussian process $Z = (Z_t)_{t \in T'}$ satisfies
\begin{align*}
	\Ex|Z_t - Z_{t'}|^2 \le \Ex|X_t - X_{t'}|^2, \qquad \forall t,t' \in T'.
\end{align*}
Then a Gaussian interpolation argument due to Chatterjee (see, e.g., the proof of Theorem~2.2.5 in \citet{Adler2007}) shows that $\Ex \Phi_\beta(X; T') \ge \Ex \Phi_\beta(Z; T')$. We now proceed to lower-bound $\Ex \Phi_\beta(Z; T')$. Define the random Gibbs measure $\tilde{\nu}(s) := \frac{e^{\beta \sigma G_s}}{\sum_{s' \in S}e^{\beta \sigma G_{s'}}}$ on the set $S$. By construction of $Z$, we write
\begin{align*}
	\Phi_\beta(Z; T') &= \frac{1}{\beta}\log \Big(\sum_{t \in T'} e^{\beta Z_t}\Big) \\
	&= \frac{1}{\beta} \log \Big(\sum_{s \in S}e^{\beta \sigma G_s}\sum_{t \in B(s,\sigma)} e^{\beta(X^{(s)}_t-X^{(s)}_s)}\Big) \\
	&= \sigma\Phi_{\beta\sigma}(G; S) + \frac{1}{\beta} \log \Bigg(\sum_{s \in S} \tilde{\nu}(s)  \sum_{t \in B(s,\sigma)}e^{\beta(X^{(s)}_t - X^{(s)}_s)} \Bigg) \\
	&\ge \sigma\Phi_{\beta\sigma}(G;S) +\sum_{s \in S}\tilde{\nu}(s) \Phi_\beta(X^{(s)}; B(s,\sigma)) - \sum_{s \in S}\tilde{\nu}(s) X^{(s)}_s,
\end{align*} 
where the last step is by Jensen's inequality. Since $G$ is independent of everything else, using the law of iterated expectation gives
\begin{align*}
	\Ex \Phi_\beta(Z; T') &= \Ex[\Ex[\Phi_\beta(Z; T')|G]]  \\
	&\ge \sigma \Ex \Phi_{\beta\sigma}(G;S) + \sum_{s \in S} \Ex[\tilde{\nu}(s)] \Ex[\Phi_\beta(X^{(s)};B(s,\sigma))] \\
	&= \sigma \Ex \Phi_{\beta\sigma}(G;S) + \frac{1}{|S|}\sum_{s \in S}  \Ex\Phi_\beta(X;B(s,\sigma)),
\end{align*}
where $\Ex[\tilde{\nu}(s)] = \frac{1}{|S|}$ follows by symmetry and where we have used the fact that the processes $X^{(s)}$ and $X$ have the same probability law. \end{proof}

Let us now examine some consequences of Theorem~\ref{thm:super_Sudakov}. First, since $\Phi_\beta(X; \cdot)$ is monotone w.r.t.\ set inclusion, it follows that the right-hand side of \eqref{eq:soft_super_Sudakov} is automatically a lower bound for $\Ex \Phi_\beta(X; T)$. Second, we recover Talagrand's minoration result (for finite $T$) in the zero-temperature limit. Indeed, using \eqref{eq:softmax_bounds} and Sudakov minoration in \eqref{eq:soft_super_Sudakov} gives
\begin{align*}
	\Ex\Big[\max_{t \in T}X_t\Big] & \ge \sigma \Ex\Big[\max_{s \in S}G_s\Big] + \min_{s \in S} \Ex\Big[\max_{t \in B(s,\sigma)}X_t\Big] - \frac{\log |T|}{\beta}\\
	&\gtrsim \sigma\sqrt{\log |S|} +  \min_{s \in S} \Ex\Big[\max_{t \in B(s,\sigma)}X_t\Big] - \frac{\log |T|}{\beta}.
\end{align*}
Taking $\beta \to \infty$, we obtain Talagrand's result. Finally, since $\phi(\beta) = \Ex \Phi_\beta(X;T) - \frac{\log |T|}{\beta}$, Theorem~\ref{thm:super_Sudakov} can be used to lower-bound the quenched free energy without assuming independence.

\section{Application: the quenched pressure in the random energy model}
\label{sec:REM}

We now illustrate the use of our results in the context of the Random Energy Model (REM), introduced by \citet{Derrida1980}. The REM is the simplest model of a physical system with random disorder; yet, despite this apparent simplicity, its behavior exhibits many nontrivial features, including a phase transition (see Chapter 5 of \citet{MezardMontanari2009} or Chapter 9 of \citet{Bovier2006} for more details on the REM).

The state space of the REM is the binary cube $T = \{-1,+1\}^N$. Each of the points $t = (t_1,\dots,t_N) \in \{-1,+1\}^N$ corresponds to a configuration of $N$ spins, each of which can point up $(+)$ or down $(-)$. The energies of the configurations $t \in T$ are i.i.d.\  $\cN(0,N/2)$ random variables $X_t$, where the independence  translates into the absence of interactions between the $N$ spins.  One of the quantities of interest is the \textit{quenched pressure}
\begin{align*}
	P_N(\beta) := \frac{1}{N}\Ex \log Z_N(\beta),
\end{align*}
where
\begin{align*}
	Z_N(\beta) = \sum_{t \in \{-1,+1\}^N} e^{\beta X_t}
\end{align*}
is the (random) partition function. The REM exhibits a phase transition in the infinite system limit $N \to \infty$: For $\beta_c = 2\sqrt{\log 2}$,
\begin{align*}
	\lim_{N \to \infty} P_N(\beta) = \begin{cases}
	\log 2 + \frac{\beta^2}{4}, & \beta < \beta_c \\
	\beta\sqrt{\log 2}, & \beta \ge \beta_c
	\end{cases}
\end{align*}
(see \citet{GiardinaStarr2007} for a self-contained proof of this using variational methods). The phase transition manifests itself in the different behavior of $\lim_{N \to \infty} P_N(\beta)$ as a function of $\beta$: quadratic for $\beta \le \beta_c$ and linear for $\beta \ge \beta_c$. We now show that Theorems~\ref{thm:Gibbs_upper} and \ref{thm:Gibbs_lower} can be used to give an elementary derivation of upper and lower bounds on the quenched pressure for each finite $N$. While the constants in these bounds are not sharp, the bounds still pinpoint the different scaling of $P_N(\beta)$ with $\beta$ (quadratic vs.\ linear) in high- and low-temperature regimes.  

\begin{theorem} For each $N$ and each $\beta_0 \ge 0$, define the functions
	\begin{align*}
		\underline{\cQ}_N(\beta; \beta_0) := \begin{cases}
		\log 2 + \frac{c^2\beta^2}{8}, & \beta \le \beta_0 \\
		\log 2 + \frac{c^2\beta^2_0}{8} + c(\beta-\beta_0) \sqrt{\frac{1}{2N}\Ex D(\nu_{N,\beta_0}\|\nu_{N,0})}, & \beta \ge \beta_0
		\end{cases}
	\end{align*}
	and
	\begin{align*}
		\overline{\cQ}_N(\beta; \beta_0) := \begin{cases}
		\log 2 + \frac{\beta^2}{4}, & \beta \le \beta_0 \\
		\log 2 + \frac{\beta^2_0}{4} + (\beta - \beta_0) \sqrt{\frac{1}{N}\Ex D(\nu_{N,\beta}\|\nu_{N,0})}, & \beta \ge \beta_0
	\end{cases}
	\end{align*}
	where $c$ is the constant in the Sudakov minoration bound \eqref{eq:Sudakov} and where $\nu_{N,\beta}(t) = e^{\beta X_t}/Z_N(\beta)$ is the REM Gibbs measure at inverse temperature $\beta$. Then there exists an inverse temperature threshold $\beta_* = \beta_{N,*} > 0$, such that
	\begin{align*}
		\underline{\cQ}_N(\beta; \beta_*) \le P_N(\beta) \le \inf_{\beta_0 \ge 0} \overline{\cQ}_N(\beta; \beta_0).
	\end{align*}
\end{theorem}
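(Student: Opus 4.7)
The core identity is $P_N(\beta) = \log 2 + \frac{1}{N}\int_0^\beta g(\lambda)\,\dif\lambda$, which follows from $N P_N(\beta) = \Lambda(\beta) = \psi(\beta) + N\log 2$ together with $\psi' = g$. Both halves of the claim therefore reduce to integrating the bounds on $g$ from Section~\ref{sec:gibbs} and pinning the value of $P_N$ at a splitting point. A recurring auxiliary fact I will need is that $\lambda \mapsto \Ex D(\nu_{N,\lambda}\|\nu_{N,0})$ is nondecreasing in $\lambda$: from \eqref{eq:Gibbs_KL} and convexity of $\Lambda$, $\frac{\dif}{\dif\lambda}D(\nu_\lambda\|\nu_0) = \lambda \Lambda''(\lambda) \ge 0$, and the interchange of derivative and expectation is justified as in the proof of Lemma~\ref{lm:replica}.

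For the upper bound I would split the integral at $\beta_0$. For $\lambda \in [0,\beta_0]$, Jensen's inequality combined with $\Ex e^{\beta X_t} = e^{N\beta^2/4}$ (each $X_t$ being $\cN(0,N/2)$) yields immediately $P_N(\beta_0) \le \log 2 + \beta_0^2/4$, which is the quadratic piece of $\overline{\cQ}_N$. For $\lambda \in [\beta_0,\beta]$, Theorem~\ref{thm:Gibbs_upper} with $\sigma^2 = N/2$ gives $g(\lambda) \le \sqrt{N\,\Ex D(\nu_\lambda\|\nu_0)}$; using the monotonicity noted above to replace $\lambda$ by $\beta$ inside the square root and then integrating in $\lambda$ produces the additive term $(\beta-\beta_0)\sqrt{(1/N)\,\Ex D(\nu_{N,\beta}\|\nu_{N,0})}$. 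Taking the infimum over $\beta_0 \ge 0$ completes the upper bound.

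For the lower bound I would take $\beta_* = \beta_{N,*}$ to be the threshold produced in the proof of Theorem~\ref{thm:Gibbs_lower}, applied with $a^2 = 2\sigma^2 = N$ in the i.i.d.\ REM; monotonicity of $r$ (Lemma~\ref{lm:nu_l2}) then ensures $1 - r(\beta) \ge c^2/2$ for all $\beta \le \beta_*$, while Theorem~\ref{thm:Gibbs_lower} itself applies for $\beta \ge \beta_*$. In the low-temperature regime $\beta \ge \beta_*$, the lower bound $g(\lambda) \ge c\sqrt{N}\sqrt{\Ex D(\nu_\lambda\|\nu_0)}$ combined with monotonicity of $\Ex D$ and integration over $[\beta_*,\beta]$ delivers precisely the additive term $c(\beta-\beta_*)\sqrt{(1/N)\,\Ex D(\nu_{N,\beta_*}\|\nu_{N,0})}$. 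In the high-temperature regime $\beta \le \beta_*$, the replica-symmetric identity of Lemma~\ref{lm:replica} gives $g(\lambda) = \lambda\sigma^2(1-r(\lambda)) \ge \lambda\sigma^2\cdot c^2/2$, and integration yields $\psi(\beta) \ge c^2\sigma^2\beta^2/4$, hence a quadratic lower bound of the form $\log 2 + (c^2/8)\beta^2$.

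The main obstacle I anticipate is pinning the constant in the high-temperature quadratic piece to exactly $1/8$ rather than $c^2/8$. Choosing $\beta_*$ instead as the smaller threshold at which $1 - r(\beta_*) = 1/2$ would deliver the clean constant $1/8$, but then Theorem~\ref{thm:Gibbs_lower}'s hypothesis $1-r(\beta) \le c^2/2$ is not yet available at that $\beta_*$, so the low-temperature half needs to absorb the gap---either by enlarging $\beta_*$ back up (at the cost of a suboptimal constant elsewhere) or by inserting a dedicated second-moment estimate in the intermediate window. In any event, the qualitative content---quadratic scaling in $\beta$ at high temperature and linear scaling at low temperature, matching the REM phase transition---is captured and is the point that the paper emphasizes.
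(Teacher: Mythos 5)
Your proposal follows essentially the same route as the paper: the integral identity $P_N(\beta) = P_N(\beta_0) + \frac{1}{N}\int_{\beta_0}^\beta g_N(\lambda)\,\dif\lambda$, Jensen's inequality for the quadratic upper piece, Theorem~\ref{thm:Gibbs_upper} combined with monotonicity of $\lambda \mapsto \Ex D(\nu_{N,\lambda}\|\nu_{N,0})$ for the linear upper piece, and the replica identity of Lemma~\ref{lm:replica} together with the Theorem~\ref{thm:Gibbs_lower} threshold $\beta_*$ for the two lower pieces; your use of monotonicity of $\Ex D$ over $[\beta_*,\beta]$ in place of the paper's monotonicity of $g_N$ (with Theorem~\ref{thm:Gibbs_lower} applied only at $\beta_*$) is an immaterial variation. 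As for the obstacle you flag: the paper's own proof likewise establishes only $P_N(\beta) \ge \log 2 + \frac{c^2\beta^2}{8}$ for $\beta \le \beta_*$ (and correspondingly $\log 2 + \frac{c^2\beta_*^2}{8}$ in the low-temperature piece), so the $\frac{\beta^2}{8}$ and $\frac{\beta_0^2}{8}$ appearing in the stated $\underline{\cQ}_N$ look like a slip in the statement (a missing factor of $c^2$, $c<1$) rather than something your argument fails to reach --- you need not redefine $\beta_*$ or insert an intermediate second-moment estimate to match what the paper actually proves.
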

\begin{remark} Notice that the quadratic (high-temperature) and the linear (low-temperature) pieces of $\underline{\cQ}_N(\beta;\beta_0)$ agree when $\beta = \beta_0$. The same applies to $\overline{\cQ}_N(\beta;\beta_0)$, although its low-temperature portion contains a nonlinear factor $\sqrt{\Ex D(\nu_{N,\beta}\|\nu_{N,0})} = \sqrt{N\log 2 - \Ex H(\nu_{N,\beta})}$, which can be further upper-bounded by $\sqrt{N \log 2}$. In particular, $\overline{\cQ}_N(\beta; \beta_c)$ is equal to $\log 2 + \frac{\beta^2}{4}$ for $\beta \le \beta_c$ and bounded from above by $\beta\beta_c = \beta\sqrt{\log 2}$ for $\beta \ge \beta_c$. This gives the correct upper bound for every finite $N$ \citep{GiardinaStarr2007}. See also the work of \citet{Olivieri1984} for finite-$N$ corrections to the asymptotic value of the free energy in the REM.
\end{remark}

\begin{proof} Let $g_N(\beta) : = \Ex[\langle X, \nu_{N,\beta}\rangle]$. Then
	\begin{align}\label{eq:pressure_integral}
		P_N(\beta) = P_N(\beta_0) + \frac{1}{N}\int^\beta_{\beta_0} g_N(\lambda)\dif\lambda, \qquad \text{for all } \beta \ge \beta_0 \ge 0.
	\end{align}
Using Jensen's inequality and the properties of Gaussian random variables, we obtain
\begin{align}\label{eq:P_N_quadratic}
	P_N(\beta) \le \log 2 + \frac{\beta^2}{4}.
\end{align}
On the other hand, from \eqref{eq:pressure_integral} and Theorem~\ref{thm:Gibbs_upper} it follows that
\begin{align*}
	P_N(\beta) &\le P_N(\beta_0) + \int^\beta_{\beta_0} \sqrt{\frac{1}{N} \Ex D(\nu_{N,\lambda}\|\nu_{N,0})}\dif \lambda \nonumber\\
	&\le \log 2 + \frac{\beta^2_0}{4} + (\beta - \beta_0) \sqrt{\frac{1}{N}\Ex D(\nu_{N,\beta}\|\nu_{N,0})},
\end{align*}
where the second step follows from \eqref{eq:P_N_quadratic} and from the fact that $\lambda \mapsto \Ex D(\nu_{N,\lambda} \| \nu_{N,0})$ is nondecreasing. This gives  $P_N(\beta) \le \overline{\cQ}_N(\beta; \beta_0)$, which we can tighten further by minimizing w.r.t.\ $\beta_0$.

To obtain the lower bound, let $\beta_{*} > 0$ be such that $r_N(\beta) = \Ex\|\nu_{N,\beta}\|^2_2 \le 1 - \frac{c^2}{2}$ for all $\beta \le \beta_*$. Then, for $\beta \le \beta_*$, using \eqref{eq:pressure_integral} with $\beta_0 = 0$ and Lemma~\ref{lm:replica} we have
\begin{align*}
	P_N(\beta) &= \log 2 + \frac{1}{2}\int^\beta_0 \lambda (1-r_N(\lambda))\dif \lambda \\
	&= \log 2 + \frac{\beta^2}{4} - \frac{1}{2}\int^\beta_0 \lambda r_N(\lambda)\dif \lambda \\
	& \ge \log 2 + \frac{c^2 \beta^2}{8}.
\end{align*}
For $\beta > \beta_*$, we can use \eqref{eq:pressure_integral} with $\beta_0 = \beta_*$ and the fact that $\beta \mapsto g_N(\beta)$ is increasing to write
\begin{align*}
	P_N(\beta) &\ge P_N(\beta_*) + \frac{1}{N}(\beta-\beta_*) g_N(\beta_*) \\
	&\ge \log 2 + \frac{c^2 \beta^2_*}{8} + \frac{1}{N}(\beta-\beta_*) g_N(\beta_*) .
\end{align*}
Moreover, observing that $\beta_*$ is exactly  the inverse temperature threshold Theorem~\ref{thm:Gibbs_lower} gives for $\nu_{N,\beta}$, we can bound $g_N(\beta_*)$ from below by
\begin{align*}
	g_N(\beta_*) \ge c\sqrt{\frac{N}{2} \Ex D(\nu_{N,\beta_*}\|\nu_{N,0})}. 
\end{align*}
Substituting this into our lower bound for $P_N(\beta)$ for $\beta \ge \beta_*$ and then assembling the two pieces for $\beta \le \beta_*$ and for $\beta \ge \beta_*$, we see that $P_N(\beta) \ge \underline{\cQ}_N(\beta; \beta_*)$.
\end{proof}

\section*{Acknowledgments}
This work was supported by the NSF under awards CCF-2348624 (``Towards a control framework for neural generative modeling'') and CCF-2106358 (``Analysis and Geometry of Neural Dynamical Systems'').


\bibliography{REM}

@article{Olivieri1984,
	abstract = {Derrida's random energy model is considered. Almost sure andLPconvergence of the free energy at any inverse temperature $\beta$ are proven. Rigorous upper and lower bounds to the finite size corrections to the free energy are given.},
	author = {Olivieri, Enzo and Picco, Pierre},
	date-added = {2026-01-15 15:04:22 -0600},
	date-modified = {2026-01-15 15:04:22 -0600},
	journal = {Communications in Mathematical Physics},
	number = {1},
	pages = {125--144},
	title = {On the existence of thermodynamics for the random energy model},
	volume = {96},
	year = {1984},
	bdsk-url-1 = {https://link.springer.com/content/pdf/10.1007/BF01217351.pdf},
	bdsk-url-2 = {https://doi.org/10.1007/BF01217351}}

@article{Baez_2022,
	author = {Baez, John C.},
	date-added = {2026-01-15 14:56:48 -0600},
	date-modified = {2026-01-15 14:57:07 -0600},
	journal = {Entropy},
	month = may,
	number = {5},
	pages = {706},
	title = {R{\'e}nyi Entropy and Free Energy},
	volume = {24},
	year = {2022},
	bdsk-url-1 = {http://dx.doi.org/10.3390/e24050706}}

@article{Derrida1980,
	author = {Bernard Derrida},
	date-added = {2025-02-05 10:42:47 -0600},
	date-modified = {2025-02-05 10:43:00 -0600},
	journal = {Physical Review B},
	number = {5},
	pages = {2613--2626},
	title = {Random-energy model: An exactly solvable model of disordered systems},
	volume = {24},
	year = {1981},
	bdsk-url-1 = {https://doi.org/10.1103/PhysRevB.24.2613}}

@book{Dupuis_Ellis_1997,
	author = {Paul Dupuis and Richard S. Ellis},
	date-added = {2025-02-05 09:29:18 -0600},
	date-modified = {2025-02-05 09:29:51 -0600},
	publisher = {Wiley},
	title = {A Weak Convergence Approach to the Theory of Large Deviations},
	year = {1997}}

@article{Mendelson2019,
	author = {Shahar Mendelson and Emanuel Milman and Grigoris Paouris},
	date-added = {2025-02-05 08:46:06 -0600},
	date-modified = {2025-02-05 08:47:31 -0600},
	journal = {Studia Mathematica},
	number = {159-202},
	title = {Generalized dual {Sudakov} minoration via dimension reduction -- a program},
	volume = {244},
	year = {2019}}

@book{Chatterjee_2014,
	author = {Chatterjee, Sourav},
	date-added = {2025-02-04 16:32:03 -0600},
	date-modified = {2025-02-04 16:32:19 -0600},
	publisher = {Springer},
	title = {Superconcentration and Related Topics},
	year = {2014},
	bdsk-url-1 = {http://dx.doi.org/10.1007/978-3-319-03886-5}}

@phdthesis{Pajor,
	author = {Alain Pajor},
	date-added = {2025-02-04 16:16:04 -0600},
	date-modified = {2025-02-04 16:17:55 -0600},
	school = {Universit\'e Pierre et Marie Curie},
	title = {Sous-espaces $\ell^n_1$ des espaces de {Banach}},
	year = {1984}}

@inproceedings{Liu_2023,
	author = {Liu, Jingbo},
	booktitle = {2023 IEEE International Symposium on Information Theory (ISIT)},
	date-added = {2025-02-04 16:10:55 -0600},
	date-modified = {2025-02-04 16:11:10 -0600},
	pages = {666-671},
	title = {From Soft-Minoration to Information-Constrained Optimal Transport and Spiked Tensor Models},
	year = {2023},
	bdsk-url-1 = {https://doi.org/10.1109/ISIT54713.2023.10206594}}

@book{Adler2007,
	author = {Robert J. Adler and Jonathan E. Taylor},
	date-added = {2025-02-04 14:59:44 -0600},
	date-modified = {2025-02-04 15:00:24 -0600},
	publisher = {Springer},
	title = {Random Fields and Geometry},
	year = {2007},
	bdsk-url-1 = {http://dx.doi.org/10.1007/978-0-387-48116-6}}

@article{Talagrand1992,
	abstract = {We gieve a completely elementary proof of the existence of majorizing measures for bounded Gaussian processes. The proof relies upon Sudakov's minoration, the concentration of measure phenomenon, and a (somewhat deceptively) simple construction.},
	author = {Talagrand, Michel},
	date-added = {2025-02-04 14:31:39 -0600},
	date-modified = {2025-02-04 14:35:43 -0600},
	journal = {Geometric {\&} Functional Analysis GAFA},
	number = {1},
	pages = {118--125},
	title = {A simple proof of the majorizing measure theorem},
	volume = {2},
	year = {1992},
	bdsk-url-1 = {https://link.springer.com/content/pdf/10.1007/BF01895708.pdf},
	bdsk-url-2 = {https://doi.org/10.1007/BF01895708}}

@article{Liu2022,
	abstract = {We give a complete solution to an open problem of Thomas Cover in 1987 about the capacity of a relay channel in the general discrete memoryless setting without any additional assumptions. The key step in our approach is to lower bound a certain soft-max of a stochastic process by convex geometry methods, which is based on two ideas: First, the soft-max is lower bounded in terms of the supremum of another process, by approximating a convex set with a polytope with bounded number of vertices. Second, using a result of Pajor, the supremum of the process is lower bounded in terms of packing numbers by means of mixed-volume inequalities (Minkowski's first inequality).},
	author = {Liu, Jingbo},
	date-added = {2025-02-04 13:54:07 -0600},
	date-modified = {2025-02-04 13:54:31 -0600},
	journal = {Probability Theory and Related Fields},
	number = {1},
	pages = {315--357},
	title = {Minoration via mixed volumes and {Cover's} problem for general channels},
	volume = {183},
	year = {2022},
	bdsk-url-1 = {https://link.springer.com/content/pdf/10.1007/s00440-022-01111-6.pdf},
	bdsk-url-2 = {https://doi.org/10.1007/s00440-022-01111-6}}

@book{Panchenko_SK,
	author = {Dmitry Panchenko},
	date-added = {2025-02-03 11:06:37 -0600},
	date-modified = {2025-02-03 11:07:02 -0600},
	publisher = {Springer},
	title = {The {Sherrington--Kirkpatrick} Model},
	year = {2013}}

@book{Talagrand_MF,
	author = {Michel Talagrand},
	date-added = {2025-02-03 10:57:13 -0600},
	date-modified = {2025-02-03 10:58:12 -0600},
	publisher = {Springer},
	title = {Mean Field Models for Spin Glasses, Volume I: Basic Examples},
	year = {2011}}

@book{Talagrand_2014,
	author = {Talagrand, Michel},
	date-added = {2025-02-02 13:56:41 -0600},
	date-modified = {2025-02-02 13:56:59 -0600},
	publisher = {Springer},
	title = {Upper and Lower Bounds for Stochastic Processes: Modern Methods and Classical Problems},
	year = {2014},
	bdsk-url-1 = {http://dx.doi.org/10.1007/978-3-642-54075-2}}

@article{vanErven2014,
	author = {Tim {van Erven} and Peter Harremo\"es},
	date-added = {2025-02-01 13:38:37 -0600},
	date-modified = {2025-02-01 13:39:57 -0600},
	journal = {IEEE Transactions on Information Theory},
	number = {7},
	pages = {3797-3820},
	title = {R\'enyi divergence and {Kullback--Leibler} divergence},
	volume = {60},
	year = {2014}}

@book{MezardMontanari2009,
	author = {Marc M\'ezard and Andrea Montanari},
	date-added = {2025-02-01 13:23:20 -0600},
	date-modified = {2025-02-01 13:23:57 -0600},
	publisher = {Oxford University Press},
	title = {Information, Physics, and Computation},
	year = {2009}}

@article{GiardinaStarr2007,
	author = {Cristian Giardin{\`a} and Shannon Starr},
	date-added = {2025-01-31 17:25:08 -0600},
	date-modified = {2025-01-31 17:26:08 -0600},
	journal = {Journal of Statistical Physics},
	number = {1},
	pages = {1-20},
	title = {Variational bounds for the generalized random energy model},
	volume = {127},
	year = {2007}}

@book{MarcusRosen2006,
	author = {Michael B. Marcus and Jay Rosen},
	date-added = {2025-01-29 14:39:38 -0600},
	date-modified = {2025-01-29 14:40:07 -0600},
	publisher = {Cambridge University Press},
	title = {Markov Processes, Gaussian Processes, and Local Times},
	year = {2006}}

@book{Bovier2006,
	author = {Antoine Bovier},
	date-added = {2025-01-29 09:47:42 -0600},
	date-modified = {2025-01-29 09:48:33 -0600},
	publisher = {Cambridge University Press},
	title = {Statistical Mechanics of Disordered Systems: A Mathematical Introduction},
	year = {2006}}

\end{document}